\newtheorem{thm}{Theorem}[section]
\newtheorem{lem}[thm]{Lemma}
\newtheorem{cor}[thm]{Corollary}
\newtheorem{rem}[thm]{Remark}
\newtheorem{prop}[thm]{Proposition}
\newcounter{probno}
\renewcommand{\qed}{\hfill$\Box$ \par \medskip}
\newcommand{\self}{\circlearrowleft}
\newcommand{\id}{\mathrm{id}}
\newcommand{\C}{\mathbf{C}}
\newcommand{\cp}{\mathbf{P}}
\newcommand{\Z}{\mathbf{Z}}
\newcommand{\supp}{\mathrm{supp}\,}
\newcommand{\pic}{\mathrm{Pic}\,}
\def\sqr#1#2{{\vcenter{\hrule height.#2pt              
     \hbox{\vrule width.#2pt height#1pt\kern#1pt
     \vrule width.#2pt}
     \hrule height.#2pt}}}
\def\square{\mathchoice\sqr{5.5}4\sqr{5.0}4\sqr{4.8}3\sqr{4.8}3}
\def\qed{\hskip4pt plus1fill\ $\square$\par\medbreak}
\newcommand{\ve}{\mathbf{e}}
\renewcommand{\div}{\mathop{\rm{Div}}}
\renewcommand{\supp}{\mathop{supp}}
\newcommand{\aut}{\mathrm{Aut\,}}
\title{Pseudoautomorphisms with invariant elliptic curves}
\author{Eric Bedford}
\address{Department of mathematics\\ Indiana University\\ Bloomington IN 47405,  { \em Current address: } Stony Brook University, Stony Brook, NY, 11794 }
\email{bedford@indiana.edu}
\author {Jeffery Diller}
\address{Department of Mathematics\\ University of Notre Dame\\Notre Dame IN  46556}
\email{diller.1@nd.edu}
\author{Kyounghee Kim}
\address{Department of mathematics\\Florida State University\\Tallahassee FL 32306}
\email{kim@math.fsu.edu}
\begin{document}
\maketitle

\bigskip
Nontrivial automorphisms of complex compact manifolds are typically rare and more typically non-existent.  It is interesting to understand which manifolds admit automorphisms, how plentiful they are on any given manifold, and what further special properties distinguish a particular automorphism, or family of automorphisms.  These problems have enjoyed much attention in the past fifteen years, motivated largely by work in complex dynamics (e.g. Cantat's thesis \cite{cantat1999dynamique}).  In this introduction, we give a quick account of some of this research, introducing in particular the more general category of pseudoautomorphisms, which occur more frequently in higher dimensions than automorphisms.  The final aim of our paper is to present a concrete alternative approach to some recent existence results \cite[Theorems 1.1 and 3.1]{perroni2011pseudo} of Perroni and Zhang for pseudoautomorphisms with invariant elliptic curves on rational complex manifolds.  Our methods lead to explicit formulas which are especially simple (see Theorems \ref{T:L} and \ref{multiprojL}) when the pseudoautomorphisms correspond to the `Coxeter element' in an infinite, finitely generated reflection group.

The topological entropy of an automorphism is a non-negative number that measures the complexity of point orbits.  `Positive entropy'  will serve as a precise and reasonable necessary condition for a map to be dynamically interesting.  In complex dimension one, i.e.\ on closed Riemann surfaces, there are no automorphisms of positive entropy.  In dimension two, Cantat \cite{cantat1999dynamique} showed that only three types of complex surfaces can carry automorphisms of positive entropy:  tori, $K3$ surfaces (or certain quotients), or rational surfaces.  Automorphisms of tori are essentially linear.  The cases of $K3$ and rational surfaces are much more interesting.  Dynamics of automorphisms of $K3$ surfaces were studied in detail by Cantat \cite{cantat2001dynamique}.  McMullen \cite{mcmullen2002dynamics} constructed examples which exhibit rotation domains (two dimensional `Siegel disks').   The family of all $K3$ surfaces has dimension  20, and the maximum dimension of a continuous family of $K3$ surface automorphisms is even smaller.   By contrast, there are continuous families of rational surface automorphisms which have arbitrarily large dimension \cite{Bedford-Kim:2010}.   

It is known \cite{Nagata, Dolgachev:2008} that rational complex surfaces $X$ that carry automorphisms of positive entropy are in fact modifications (i.e.\  compositions of point blowups) $\pi:X\to\cp^2$ of the complex projective plane $\cp^2$.  Thus a rational surface automorphism $F_X:X\to X$ with positive entropy descends via $\pi$ to a birational `map' $F:\cp^2\dasharrow\cp^2$ which is locally biholomorphic at generic points but also has a finite union of exceptional curves that are contracted to points and conversely a finite collection $I(F)$ of indeterminate points which are (in a precise sense) each mapped to an algebraic curve.  Since the group of all birational maps $F:\cp^2\dasharrow \cp^2$ is quite large, this suggests trying to find automorphisms by looking at a promising family of plane birational maps and identifying those elements whose exceptional/indeterminate behavior can be eliminated by repeated blowup.  

The papers \cite{Bedford-Kim:2006} and \cite{Bedford-Kim:2009} pursued exactly this idea for a well-chosen two parameter family of quadratic birational maps $F:\cp^2\dasharrow\cp^2$ and found a countable set of parameters for which there exists a modification $\pi:X\to \cp^2$ lifting $F$ to an automorphism $F_X:X\to X$ with positive entropy.  A generic \ quadratic birational map $F$ on $\cp^2$ has three exceptional lines $\Sigma_j$, $j=0,1,2$ and three points of indeterminacy $e_0,e_1,e_2$.  The maps $F$ considered in \cite{Bedford-Kim:2006} and \cite{Bedford-Kim:2009} all share the further property that $F(\Sigma_0) = e_1$ and $F(\Sigma_1) = e_2$.  If the parameter is chosen correctly, one can further arrange that $F^n(\Sigma_2) = e_0$ for some (minimal) $n>0$.  In this case, the map $F$ lifts to an automorphism $F_X:X\to X$ of the rational surface $\pi:X\to \cp^2$ obtained by blowing up $e_1$, $e_2$ and the points $F^j(\sigma_2)$, $1\leq j\leq n$.  In effect, the exceptional curves and indeterminate points cancel either other out in the blown up space $X$.  It was observed in \cite{Bedford-Kim:2006} that $F_X$ has finite order when $n\leq 6$, zero entropy when $n=7$ and positive entropy for all $n\geq 8$.  In fact, it is generally true that one must blow up at least ten points in $\cp^2$ to arrive at a rational surface that admits an automorphism with positive entropy.

There is a curious dichotomy, discussed at length in \cite{Bedford-Kim:2009}, among the automorphisms discovered in \cite{Bedford-Kim:2006}.  For any fixed $n\geq 8$, there are finitely many maps in the family satisfying $F^n(\Sigma_2) = e_0$.  Some, but not all, of these have the additional feature that they preserve a cubic curve $C\subset\cp^2$ with a cusp singularity.  This curve $C$, when it exists, contains all points blown up by the modification $\pi:X\to\cp^2$, and the proper transform of $C$ by $\pi$ is a rational elliptic curve preserve by the automorphism $F_X$.  

By \emph{requiring} the existence of an invariant elliptic curve, McMullen \cite{mcmullen2007dynamics} showed how one can arrive at the examples in \cite{Bedford-Kim:2009} synthetically.  His approach is to begin with a plausible candidate $F_X^*:\pic(X)\to\pic(X)$ for the induced action of the automorphism on the picard group of the surface and then seek an a surface $X$ and an automorphism $F_X:X\to X$ that `realizes' $F_X^*$.  Here $\pic(X)$ is equivalent to $\Z^{1+N}$, where $N$ is the number of blowups needed to create $X$, and if the identification between $\pic(X)$ and $\Z^{1+N}$ is chosen appropriately, the intersection form on $\pic(X)$ becomes the standard Lorentz metric on $\Z^{1+N}$.  The natural candidates for the action $F_X^*$ are isometries in a certain coxeter group acting on $\Z^{1+N}$.  If one seeks to realize $F_X^*$ with an automorphism $F_X$ that fixes an elliptic curve $C$, then the action $F_X^*$ on $\pic(X)$ must restrict to a corresponding action $(F_X|_C)^*$ on $\pic(C)$.  It is well-known that the identity component of $\pic(C)$ identifies naturally with the regular part of $C$ (see e.g. the appendix to \cite{Diller:2009}).  Using this identification and some theory of Coxeter groups, McMullen gave a sufficient condition for realization of $F_X^*$ by an automorphism.  In particular, the maps with invariant elliptic curves discovered in \cite{Bedford-Kim:2009} turn out to be realizations of the so-called Coxeter element in the isometry group of $\Z^{1+N}$.

The ideas in \cite{Bedford-Kim:2009} and \cite{mcmullen2007dynamics} were combined in later work.  In particular, \cite{Diller:2009} described all possible rational surface automorphisms with invariant elliptic curves that are obtained as lifts of \emph{quadratic} birational maps $F:\cp^2\dasharrow\cp^2$.  Uehara \cite{Uehara:2010} showed that whether or not one can actually realize a plausible (in McMullen's sense) candidate action $F_X^*$, one can always construct a rational surface automorphism $F_X$ that is closely related to $F_X^*$ in the sense that the topological entropy of $F_X$ has the correct value (the log of the spectral radius of $F_X^*$).

Constructing automorphisms on rational $k$-folds seems to be much more difficult when $k\geq 3$.  At present, the only known examples with positive entropy appear in \cite{Oguiso:2013ty}.  If one works only with rational $k$-folds obtained as finite compositions $\pi:X\to \cp^k$ of point blowups over projective space, then a recent result of Truong \cite{Truong:2012uq} and Bayraktar-Cantat \cite{Bayraktar-Cantat} says us that any automorphism of $X$ must have zero entropy.  So with this constraint on the manifold $X$, one must settle for constructing maps which are not quite automorphisms.   A birational map $F_X:X\dasharrow X$ is a \emph{pseudoautomorphism} \cite{Dolgachev-Ortland} if there are sets $S_1,S_2\subset X$ of codimension $\ge2$ such that $F:X-S_1\to X-S_2$ is biregular.  Equivalently, the image of a hypersurface under both $F_X$ and $F_X^{-1}$ is always a hypersurface and never a subvariety of codimension larger than one. 

Having expanded the class of maps we seek, we also modify our criterion for determining which maps are dynamically interesting.  Entropy is not an invariant of birational conjugacy (see Guedj \cite{Guedj:Panorama}), so we employ a related but birationally invariant number, the (first) dynamical degree $\delta(F_X)$.  For a pseudoautomorphism, $\delta(F_X)$ is just the spectral radius of the induced action $F_X^*:\pic(X)\to\pic(X)$.  When, as in dimension two, $F_X$ is a genuine automorphism, celebrated results of Gromov \cite{Gromov} and Yomdin \cite{Yomdin} imply that $\log\delta(F_X)$ is the entropy of $F_X$.  In fact this equality holds generically \cite{MR2752759} for birational maps of $\cp^k$, but it is not known precisely when it fails.  At any rate, the first dynamical degree is much easier to work with for pseudoautomorphisms, so it seems reasonable to substitute $\delta(F_X)>1$ for positive entropy in our criterion for dynamically interesting maps.

%

Following McMullen's approach, Perroni and Zhang \cite{perroni2011pseudo} recently showed that one can also construct pseudoautomorphisms $F_X:X\dasharrow X$ with $\delta(F_X) > 1$ on point blowups $X$ of $\cp^k$ (and more generally, on point blowups of products $\cp^k\times\dots\times\cp^k$).  As in McMullen, they begin with a candidate for the pullback action $F_X^*:\pic(\cp^k)\to\pic(\cp^k)$, chosen from a certain reflection group.  They proceed by requiring $F_X$ to preserve an \emph{elliptic normal curve} $C$ (discussed here in \S\ref{S:ENcurve}) and exploiting the group structure that $C_{reg}$ inherits from its identification with $\pic_0(C)$; ant then they obtain a sufficient criterion for realizing the proposed action $F_X^*$ with a pseudoautomorphism.  Their criterion implies in particular that when $F_X^*$ is the `coxeter element' in the reflection group, then $F_X^*$ is realizable.  One has $\delta(F_X^*) > 1$ in this case, so the resulting pseudoautomorphism is, by our standard, dynamically interesting.  

Our goal in this article is to follow ideas from \cite{Diller:2009} in order to make the construction of Perroni and Zhang more explicit, arriving at precise and fairly simple formulas for the maps they discovered.  The approach is as follows.  We begin with what we call \emph{basic cremona maps} $F := S\circ J\circ T^{-1}$ on $\cp^k$ (discussed at length in \S\ref{S:basic cremona maps}).  
Here $S,T\in PGL(k+1,{\bf C})$ are linear automorphisms, and $J:\cp^k\dasharrow\cp^k$ is the Cremona involution $J[x_0:\cdots:x_k]=[x_0^{-1}:\cdots:x_k^{-1}]$.  The exceptional hypersurfaces of $J$ (and hence $F$) are the coordinate hyperplanes $\Sigma_j:=\{x_j=0\}$.  The image $J(\Sigma_j)$ is the point $e_j=[0:\cdots:0:1:0:\cdots:0]$ obtained by intersecting all the other coordinate hyperplanes.  
The exceptional hypersurfaces of $J$ (and hence $F$) are the coordinate hyperplanes $\Sigma_j:=\{x_j=0\}$.  The image $J(\Sigma_j)$ is the point $e_j=[0:\cdots:0:1:0:\cdots:0]$ obtained by intersecting all the other coordinate hyperplanes.  Conversely, $J$ is indeterminate along the codimension two set consisting of points where two or more coordinate hyperplanes meet.  

The effect of the linear maps $S$ and $T$ is to vary the locations of the exceptional hypersurfaces and their images for $F$ and $F^{-1}$.  Specifically, the columns $S(\ve_j)$ of $S$ are images of exceptional hypersurfaces for $F$ and the columns of $T$ are the images of exceptional hypersurfaces of $F^{-1}$.  One can use this freedom to try and arrange that there exist integers $n_j$ and a permutation $\sigma$ of $\{0,\dots,n\}$ such that 
$$
F^{n_j-1}(S(\ve_j))=T(e_{\sigma(j)}), \eqno(*)
$$
where none of the intermediate points $F^n(\ve_j)$, $1\leq n<n_j-1$, lie in $I(F)$.  Under these conditions it is straightforward to see that when one blows up the intermediate points, then $F$ lifts to pseudoautomorphism $F_X:X\dasharrow X$.  The data $\{(n_0,\dots, n_k),\sigma\}$ was called \emph{orbit data} in \cite{Bedford-Kim:2004}, where it was shown that the orbit data alone are sufficient to determine the dynamical degree $\delta(F_X)$.   For given orbit data, the condition $(*)$ amounts to a polynomial system of equations satisfied by the entries of the matrices $S$ and $T$.  The simple appearance of this condition is deceptive, however, because it involves equations of many variables and polynomials of very high degree.  Moreover, $S$ and $T$ are taken from the noncompact group $\aut(\cp^k)$, so one cannot reliably apply intersection theory even to guarantee existence of solutions.

Things become simpler if we require that $F$ preserves the elliptic normal curve $C\subset\cp^k$.  The main result of \S\ref{S:basic cremona maps}, and the first step in our construction of pseudoautomorphisms, is Theorem \ref{good br maps}.  It gives an explicit description of those basic cremona maps that fix the elliptic normal curve $C\subset\cp^k$ in terms of the points $T(\ve_j), S(\ve_j)$ and the (affine) restriction $F|_C$ of $F$ to $C$.  In particular, preserving $C$ is essentially equivalent to requiring that $C_{reg}$ contains the points $S(\ve_j), T(\ve_j)$ and therefore also, all intermediate points in $(*)$.  The orbits $F^{n_j-1}(S(\ve_j))$ are now obtained by iterating an affine map inside a one dimensional set, so the equations imposed by the orbit data are much more tractable.

In sections \S\ref{S:pseudo} and \S\ref{S:Pk_coxeter}, we therefore use Theorem \ref{good br maps} to derive a formula for $F$ in the case where the orbit data corresponds to the action $F_X^*:\pic(X)\to\pic(X)$ of the Coxeter element.  It turns out that the formulas are much simpler after a linear conjugation, letting $F = L\circ J$, where $L = T^{-1}\circ S$.  Lemma \ref{lem:tj} and Theorem \ref{T:L} combine to give the entries of the matrix $L$ and hence a formula for $F$.

The connection between rational surface automorphisms and Coxeter groups is more straightforward in dimension two because in that case the intersection product on a surface gives a quadratic form on $\pic(X)$.  The pullback action  of any automorphism of $X$ is then an isometry of $\pic(X)$ that decomposes into a sequence of geometrically natural reflections.  In higher dimensions, one needs an auxiliary identification between intersection product of divisors and an actual quadratic form.  We describe this identification in Section~\ref{S:coxeter}.  If $\pi:X\to\cp^k$ is the blowup of $N$ points $p_1,\dots,p_N\in\cp^k$, then the cohomology group $H^{2}(X;{\bf Z})$ is naturally isomorphic to $Pic(X)$.  A basis for either of these groups is given by the (pullback of the) class of a general hyperplane $E_0\subset\cp^k$, together with the exceptional blowup divisors $E_j$ over $p_j$.  It turns out that there is a unique element $\Phi\in H^{2k-4}(X;{\bf Z})$  such that the inner product
$$
\langle D,D'\rangle:= H\cdot H'\cdot \Phi
$$
on classes $D,D'\in\pic(X)$ is invariant by any  pseudoautomorphism $F_X:X\dasharrow X$ corresponding to a basic cremona map.  Further, we can decompose the action $F_X^*:\pic(X)\to\pic(X)$ into simple reflections as in the surface case.  Finally, the maps we arrive at here can be seen to represent the `Coxeter element', which is the isometry of $\pic(X)$ obtained by composing all of the basic generating reflections.

For simplicity we have confined our attention to pseudoautomorphisms on modifications of $\cp^k$ with invariant cuspidal elliptic curves, but our methods work more generally.  In particular, as in Perroni-Zhang, we can replace $\cp^k$ with a product of projective spaces $\cp^k\times\dots\times\cp^k$.  In section \ref{S:multi} we sketch the main details of our method for \emph{biprojective} spaces $\cp^k\times\cp^k$.  It is worth noting that our methods work when the elliptic normal curve, is replaced by various other elliptic curves.    For instance, in section \ref{S:multi} we also say a few words about replacing the elliptic normal curve with $k+1$ concurrent lines.

%
%
%

We do not know whether our methods can be used to produce pseudoautomorphisms with non-elliptic invariant curves $C\subset{\bf P}^k$.  For instance, a union of $k+1$ lines in $\cp^k$ has the same degree as the elliptic normal curve, but when the lines are mutually disjoint, the union is far from elliptic and seems much harder to work with.  On the other hand, a related construction of a pseudoautomorphism in ${\bf P}^3$ is given in \cite{Bedford-Cantat-K}; this construction involves iterated blowups along an invariant curve quite different from the curves treated here.


\section{From elliptic normal curves...}\label{S:ENcurve}

By design, the pseudoautomorphisms we construct in this paper will all have a distinguished invariant curve.  In this section we describe the curve and its key properties.

Let $[x_0,\dots,x_k]$ be homogeneous coordinates on $\cp^k$.  An irreducible complex curve $C\subset\cp^k$ is \emph{rational} if there is a holomorphic parametrization $\psi:\cp^1\to C$. Using affine coordinates on the domain and homogeneous coordinates on the range of $\psi$, one can write $\psi(t) = [\psi_0(t):\dots:\psi_k(t)]$ where $\psi_j(t)$ are polynomials with no common factor.  The \emph{degree} of $C$ is the number of intersections, counted with multiplicity, between $C$ and any hyperplane $H\subset\cp^k$ that does not contain $C$.  This is a topological invariant, independent of the parametrization $\psi$.  Nevertheless, one sees readily that $\deg C = \max_j\deg\psi_j$.

Let $C = \gamma(\cp^1)\subset \cp^k$ be the complex curve of degree $k+1$ given by $\gamma(t) = [1:t:\dots:t^{k-1}:t^{k+1}]$ for all $t\in\C$ and $\gamma(\infty) = [0:\dots:0:1]$.  In analogy with rational normal curves, $C$ is sometimes called the \emph{elliptic normal curve}.  The following results further justify this term.  

\begin{prop}
\label{encurve} The curve $C$ has a unique smooth inflection point at $\gamma(0)$, and a unique singularity at $\gamma(\infty)$, which is an ordinary cusp.  Moreover,
\begin{itemize}
 \item No hyperplane $H\subset\cp^k$ contains $C$.  
 \item No proper linear subspace $L\subset \cp^k$ contains more than $\dim L+1$ points of $C_{reg}$, counted with multiplicity.
 \item Any other degree $k+1$ curve $C'$ that is not contained in a hyperplane and that has a cusp singularity is equal to $T(C)$ for some $T\in\aut(\cp^k)$.
\end{itemize}
 \end{prop}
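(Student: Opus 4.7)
My plan is to verify each assertion by direct calculation from the explicit parametrization $\gamma$.

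For smoothness on $\gamma(\mathbf{C})$ and the inflection at $\gamma(0)$, I would compute $\gamma'(t) = (0, 1, 2t, \ldots, (k-1)t^{k-2}, (k+1)t^k)$; since the second entry is identically $1$, $\gamma|_{\mathbf{C}}$ is an immersion. At $t=0$, the first $k$ derivative vectors $\gamma^{(j)}(0) = j!\,\ve_j$ (for $0 \leq j \leq k-1$) span the first $k$ coordinate directions, while $\gamma^{(k)}(0) = 0$ and $\gamma^{(k+1)}(0) = (k+1)!\,\ve_k$. Thus the osculating hyperplane at $\gamma(0)$ has contact of order $k+1$ with $C$ instead of the generic order $k$, which is the defining condition of a smooth inflection. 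The Wronskian $W(t) = \det[\gamma(t), \gamma'(t), \ldots, \gamma^{(k)}(t)]$ equals a nonzero constant times $t$: expanding along the last row, whose only nonzero entry is $(k+1)!\,t$ in the last column, reduces $W$ to $(k+1)!\,t$ times the determinant of the upper-triangular $k \times k$ Wronskian of $1, t, \ldots, t^{k-1}$, so $W$ vanishes on $\mathbf{C}$ only at $t = 0$, establishing uniqueness of the inflection in $C_{reg}$. For the cusp at $\gamma(\infty)$, I would reparametrize by $s = 1/t$ and multiply by $s^{k+1}$, obtaining $\tilde\gamma(s) = [s^{k+1}:s^k:\cdots:s^2:1]$. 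In the affine chart $x_k = 1$ this becomes $(s^{k+1}, s^k, \ldots, s^2)$; $\tilde\gamma'(0) = 0$ while $\tilde\gamma''(0)/2! = \ve_{k-1}$ and $\tilde\gamma'''(0)/3! = \ve_{k-2}$ are linearly independent, matching the standard local model $(s^2, s^3, \ldots)$ of an ordinary cusp. Because $\gamma|_{\mathbf{C}}$ is already an injective immersion, $\gamma(\infty)$ is the unique singularity.

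For the first bullet, if $\sum a_j x_j = 0$ contains $C$ then pulling back by $\gamma$ gives $a_0 + a_1 t + \cdots + a_{k-1} t^{k-1} + a_k t^{k+1} \equiv 0$, forcing all $a_j = 0$ by linear independence of the monomials. For the second bullet I would set $V = \mathrm{span}(1, t, \ldots, t^{k-1}, t^{k+1}) \subset \mathbf{C}[t]$, the image of $\gamma^*$ on linear forms, of dimension $k+1$. A proper subspace $L \subset \cp^k$ of projective dimension $d$ corresponds to a subspace $W_L \subset V$ of dimension $k - d$, and containment of $\gamma(t_i) \in L$ with multiplicity $\mu_i$ (total $n = \sum \mu_i$) translates to $W_L \subset \ker \mathrm{ev}$, where $\mathrm{ev}: V \to \mathbf{C}^n$ records the first $\mu_i$ Taylor coefficients of each $p \in V$ at $t_i$. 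For $n \leq k$, the restriction of $\mathrm{ev}$ to $V_0 = \mathrm{span}(1, t, \ldots, t^{k-1}) \subset V$ is already surjective because the confluent Vandermonde matrix at distinct nodes is nonsingular; hence $\dim \ker \mathrm{ev} = k + 1 - n$, and the inclusion $W_L \subset \ker \mathrm{ev}$ gives $k - d \leq k + 1 - n$, i.e., $n \leq d + 1$, as claimed.

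For the third bullet, the plan is to reduce any cuspidal degree-$(k+1)$ rational curve $C' \subset \cp^k$ not lying in a hyperplane to $C$ by successive normalizations via $\aut(\cp^k) \times \aut(\cp^1)$. First I would parametrize $\phi : \cp^1 \to C'$ and use $\aut(\cp^1)$ to place the cusp at $\phi(\infty)$, then use $\aut(\cp^k)$ to move the cusp to $[0:\cdots:0:1]$ and arrange $\phi_k(t) = t^{k+1}$ plus lower-order terms, with $\deg \phi_j \leq k$ for $j < k$. The key step is that the cusp condition --- vanishing of the reparametrized tangent $\tilde\phi'(0)$ after $s = 1/t$ --- forces the coefficient of $t^k$ in each $\phi_j$ with $j < k$ to vanish, so in fact $\deg \phi_j \leq k - 1$. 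Since $C'$ is not in a hyperplane, $\phi_0, \ldots, \phi_{k-1}$ span $\mathbf{C}[t]_{\leq k-1}$, and an echelon reduction by $\aut(\cp^k)$ yields $\phi_j = t^j$ for $j < k$. A final translation $t \mapsto t + c$ with $c$ chosen to kill the remaining $t^k$ coefficient in $\phi_k$, followed by one more echelonization, produces $\phi = \gamma$, so $C' = T(C)$ for the accumulated $T \in \aut(\cp^k)$. I expect the main difficulty will be the careful analysis of the cusp condition and tracking how the various normalizations interact.
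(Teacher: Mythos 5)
Your proposal is correct in substance, and for the last two bullets it takes a genuinely different route from the paper. The local analysis at $\gamma(0)$ and $\gamma(\infty)$ and the first bullet are just the ``elementary computations'' the paper omits (one slip: the single nonzero entry $(k+1)!\,t$ of the Wronskian matrix sits in the last \emph{column}, not the last row, since $d^k(t^i)/dt^k=0$ for $i\le k-1$; the conclusion $W(t)=c\,t$ is unaffected). For the second bullet the paper argues geometrically: it passes a hyperplane through $L$, through $k-\dim L-2$ auxiliary regular points, and through the cusp, and uses the fact that the cusp absorbs multiplicity two in $H\cdot C=k+1$. You instead work in $V=\langle 1,t,\dots,t^{k-1},t^{k+1}\rangle$ and use Hermite interpolation by $V_0=\C[t]_{\le k-1}$; the geometric role of the cusp is replaced by the algebraic fact that $t^k$ is missing from $V$. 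Both give the sharp bound $\dim L+1$ for $\codim L\ge 2$ (to be complete you should add the routine reduction to a subscheme of total multiplicity exactly $\dim L +2\le k$ when $n>k$). For the third bullet the paper inductively builds the osculating flag $L_0\subset\dots\subset L_{k-1}$ at the cusp and reads off the vanishing orders $\psi_{k-j-1}=t^{j+2}\tilde\psi_{k-j-1}$ before normalizing; you normalize the parametrization directly, observing that non-immersiveness of $\tilde\phi$ at $s=0$ kills the $t^k$-coefficients of $\phi_0,\dots,\phi_{k-1}$, after which echelon reduction and a translation in $t$ finish. Your route is shorter and needs only that the singularity is unibranch and non-immersive (ordinariness of the cusp then falls out of the normal form); the paper's preliminary hyperplane count additionally shows the singular point has multiplicity exactly two and is unique on $C'$, a fact worth recording either way. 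Both you and the paper tacitly assume $C'$ is rational in order to parametrize it; this deserves a sentence (e.g.\ project from a general smooth point of $C'$ onto a nondegenerate degree-$k$, hence rational normal, curve in $\cp^{k-1}$).

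One caveat applies equally to your argument and to the paper's: the second bullet is false as literally stated when $L$ is a hyperplane, since a generic hyperplane meets $C$ in $k+1>\dim L+1$ distinct regular points --- your own computation exhibits the nontrivial kernel of $\mathrm{ev}$ for $n=k+1$ nodes with $\sum t_i=0$. Your interpolation argument, like the paper's, actually establishes the bound only for $\codim L\ge 2$ (for $d=k-1$ your implication ``$n\le k\Rightarrow n\le d+1$'' is vacuous), which is the only case used later, e.g.\ in Proposition \ref{exceptionalpts}. It would be better to state and prove the bullet with the hypothesis $\codim L\ge 2$ than to let this case pass silently.
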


Note that in the second item, if $\dim L < k-1$ then the multiplicity of $L\cap C$ at $p$ is defined to be the minimal (i.e. generic) multplicity of $H\cap C$ at $p$ among hyperplanes $H$ containing $L$.  

\begin{proof}
The initial assertions follow from elementary computations.  That $C$ is not contained in a hyperplane follows from linear independence of the monomials $\{1,t,\dots,t^{k-1},t^{k+1}\}$.  The second item now follows for hyperplanes $L=H\subset\cp^k$ from the fact that $H\cdot C = k+1$.

Suppose instead that $L\subset \cp^k$ is a linear subspace with codimension at least two.  Let $S\subset C-L$ be a set of $k-\dim L-2$ regular points of $C$.  Then there exists a hyperplane $H$ that contains $L$, $S$ and the cusp of $C$.  Since $H$ does not contain $C$, and the cusp is a point of multiplicity (at least) two in $H\cap C$, we infer that
$$
\# L\cap C_{reg} + \# S + 2 \leq k+1 = H\cdot C,
$$
which implies the second item in the proposition.  

It remains to establish the third item.
If $C'$ is another curve with degree $k+1$ and $p\in C'$ is a singular point, then we can choose a hyperplane $H\ni p$ that meets $C'$ at $k$ distinct points $p_1=p, p_2,\dots, p_k$.  Thus
$H\cdot C' \geq (k-1) + \mu$, where $\mu$ is the multiplicity $p$ in $C'$.  Necessarily then $\mu = 2$, and we have equality.  In particular, no other point of $C'$ is singular.  

To see that $C'$ is isomorphic to $C$ via $\aut(\cp^k)$, we show inductively that there exists a flag $L_0 := \{p\} \subset L_1 \subset \dots \subset L_{k-1}\subset\cp^k$ of linear subspaces such that $\dim L_j = j$, and $L_j\cap C = \{p\}$ with multiplicity $j+2$.   To this end, suppose we have a partial flag $L_0\subset\dots \subset L_{j-1}$ as described.  The set of $j$ dimensional subspaces containing $L_{j-1}$ is naturally parametrized by $\cp^{k-j+1}$ (i.e. each is determined by $L_{j-1}$ and a choice of normal vector to $L_{j-1}$ at $p$).  Thus we have a map $L:C-\{p\} \to \cp^{k-j+1}$ given by $q\mapsto L(q)$ where $L(q)$ is the unique $j$ dimensional subspace containing $q$ and $L_{j-1}$.  Since $L$ is meromorphic on $C$, and $C$ is one dimensional, the map $L$ extends holomorphically across $p$.  For all $q\in C_{reg}$, we have that $L(q)$ contains $p$ with multiplicity at least $j+1$ and $q$ with multiplicity at least $1$.  Since $L(q)\to L(p)$ as $q\to p$, it follows that $L_j := L(p)$ contains $p$ with multiplicity $\mu$ at least $j+2$.
Now if $S\subset C$ is a generic set of $k-j-1$ points, then $S$ and $L_j$ span a hyperplane $H$, and 
$$
k+1 = H\cdot C \geq \#S + \mu = k-j-1+\mu.
$$
Hence $\mu = j+2$ exactly as claimed.  

Finally, we let $T\in\aut(\cp^k)$ be a linear transformation satisfying $T(L_j) = \{x_0 = \dots x_{k-j-1} = 0\}$.  Let $\psi = (\psi_0,\dots,\psi_k):\cp^1\to C'$ be a polynomial parametrization satisfying $\psi(0) = [0,\dots,0, 1] = T(p)$.  By hypothesis, $\deg \psi_j \leq k+1$ for each $j$.   Since $L_j$ meets $C'$ at $p$ to order $j+2$ and $p$ is a cusp, we have $\psi_{k-j-1}(t) = t^{j+2}\tilde\psi_{k-j-1}(t)$ for some polynomial $\tilde\psi_{k-j-1}$ such that $\tilde \psi_{k-j-1}(0)\neq 0$.  Thus we can apply a further `triangular' transformation $S\in\aut(\cp^k)$ so that $S\circ \psi(t) = [t^{k+1},t^k,\dots,t^2,1]$.  Thus $t^{k+1}S\circ \psi(1/t)=\gamma(t)$;  i.e. $S\circ T(C') = C$.
\end{proof}

For any $p\in C_{reg}$, we write $[p]\in\div(C)$ to indicate the divisor of degree $1$ supported at $p$.  The following classical fact about $C$ will be essential in what follows.

\begin{prop}[Group Law]
\label{grouplaw}
The set $\pic_0(C)$ of linear equivalence classes of degree zero divisors on $C$ is isomorphic (as an algebraic group, i.e. as a Riemann surface and a group) to $\C$, with isomorphism given by $\mu=\sum n_j[\gamma(t_j)] \mapsto \sum n_j t_j$ whenever $\gamma(\infty) \notin \supp\mu$.  If, moreover, $\mu = D|_C$ is the restriction to $C$ of a divisor $D\subset \div\cp^k$, then $\sum n_j = \deg D$ and $\sum n_j t_j  =0$.
\end{prop}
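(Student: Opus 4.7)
The statement is a concrete instance of the standard description of the generalized Jacobian of a rational curve with one cuspidal singularity, which is the additive group $\C$.  My plan is to argue directly from the parametrization, using the normalization $\nu:\tilde C=\cp^1\to C$ (a bijection since a cusp has a unique preimage) and an explicit computation of the local ring of $C$ at $\gamma(\infty)$.  Setting $s=1/t$, the parametrization becomes $\gamma(1/s)=[s^{k+1}:s^k:\dots:s^2:1]$ near the cusp, so in the affine chart $\{x_k=1\}$ the curve is parametrized by $(u_0,\dots,u_{k-1})=(s^{k+1},s^k,\dots,s^2)$.  Hence $\mathcal{O}_{C,\gamma(\infty)}$ is generated as a $\C$-algebra by $s^2,s^3,\dots,s^{k+1}$, and so coincides with the subring $\C[[s^2,s^3]]\subset\C[[s]]$ of formal power series with vanishing linear term.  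Consequently a rational function $f(t)$ on $\tilde C$ descends to a rational function on $C$ that is regular at the cusp if and only if $f(1/s)$ is holomorphic at $s=0$ with vanishing coefficient of $s$.

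With this criterion in hand the isomorphism is immediate.  Given $\mu=\sum n_j[\gamma(t_j)]$ with $\sum n_j=0$ and all $t_j\in\C$, any rational function on $\tilde C$ with divisor $\sum n_j[t_j]$ has the form $f(t)=c\prod(t-t_j)^{n_j}$, and
\[
f(1/s)=c\prod s^{-n_j}(1-t_js)^{n_j}=c\prod(1-t_js)^{n_j}=c\Bigl(1-s\sum n_jt_j+O(s^2)\Bigr),
\]
the first equality using $\sum n_j=0$.  Thus $\mu$ is principal on $C$ iff $\sum n_jt_j=0$, so the assignment $[\mu]\mapsto\sum n_jt_j$ gives an additive injection $\pic_0(C)\hookrightarrow\C$.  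Surjectivity is immediate from $[\gamma(z)]-[\gamma(0)]\mapsto z$, yielding the asserted isomorphism of algebraic groups.

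For the second assertion, since $\pic(\cp^k)$ is generated by the hyperplane class, it suffices to treat $D=H=\{a_0x_0+\cdots+a_kx_k=0\}$ not containing the cusp, so that $a_k\neq0$.  The intersection $H\cap C$ is then cut out by
\[
p(t)=a_0+a_1t+\cdots+a_{k-1}t^{k-1}+a_kt^{k+1},
\]
a polynomial of degree $k+1$ with leading coefficient $a_k$.  The \emph{gap} in the parametrization---$t^k$ is absent from $\gamma$---forces the coefficient of $t^k$ in $p$ to vanish, so Vieta's formulas yield $\sum n_jt_j=0$ immediately, while $\sum n_j=k+1=H\cdot C$ matches the first claim.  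For general $D$ of degree $d$ one writes $D\sim dH$ on $\cp^k$ with $H$ chosen generic (missing the cusp), restricts to $C$, and invokes the isomorphism of the previous paragraph to transport $\sum n_jt_j=0$ from $dH|_C$ to $D|_C$.

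The conceptual crux---and the only nontrivial point---is recognizing that the single gap $t^{k-1}\to t^{k+1}$ in the parametrization is simultaneously responsible for (i) the cusp singularity at $\gamma(\infty)$, (ii) the identification $\mathcal{O}_{C,\gamma(\infty)}\cong\C[[s^2,s^3]]$, and (iii) the vanishing of $\sum n_jt_j$ for hyperplane sections via Vieta.  Once these three facets of the same phenomenon are recognized, the proof reduces to the short calculations above.
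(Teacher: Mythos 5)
Your proof is correct and follows essentially the same route as the paper's: both detect principality on $C$ by the vanishing of the linear coefficient at the cusp of the function pulled back to the normalization (the paper phrases your $\mathcal{O}_{C,\gamma(\infty)}\cong\C[[s^2,s^3]]$ computation as the condition $(h\circ\gamma)'(\infty)=0$), and both reduce the second claim to hyperplane sections via linear equivalence. The only cosmetic difference is that you verify $\sum n_jt_j=0$ for a hyperplane section directly from the missing $t^k$ coefficient via Vieta, whereas the paper applies the first claim to $D-dH$ with $H=\{x_k=0\}$, whose restriction is $(k+1)[\gamma(0)]$.
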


In particular, any divisor $\delta\in\div(C)$ of degree zero is equivalent to $[\gamma(t)] - [\gamma(0)]$ for some $t\in\C$ and addition in $\pic_0(C)$ is given by $\sum ([\gamma(t_j)] - [\gamma(0)]) = [\gamma(\sum t_j)] - [\gamma(0)]$.

\begin{proof}
Equivalence between $\pic_0(C)$ and $\C = C_{reg}$ is classical.
The point here is that a divisor $\delta\in\div(C)$ is principle if and only if 
$\delta = \div h$ for some rational $h:C\to\cp^1$ satisfying (without loss of generality) $h(\gamma(\infty)) = 1$.  But because $\gamma'(\infty)$ vanishes to first order, we see that $h\circ \gamma:\cp^1\to\cp^1$ is a meromorphic function satisfying $h\circ \gamma(\infty) = 1$ and $(h\circ\gamma)'(\infty) = 0$.  Writing $h\circ \gamma = P/Q$, one sees that this is equivalent to $\deg P = \deg Q$ and $\sum_{P(t)=0} t = \sum_{Q(t)=0} t$, where the roots are included with multiplicity in each sum.

Let $H\subset \cp^2$ be the hyperplane defined by $x_k$.  Note that $H|_D = (k+1)[\gamma(0)]$.  And if $D\in\div(\cp^k)$ is another effective divisor with degree $d$ such that $\gamma(0)\notin\supp D$, then $D|_C = \sum_{j=1}^{(k+1)d} \gamma(t_j)$ for some $t_j\in C$.  Since $D - dH$ is principal in $\cp^k$, we have that $[\gamma(\sum t_j)] - [0] \sim (D-dH)|_C \sim 0$ in $\pic_0(C)$.  So $\sum t_j = 0$.  
\end{proof}

\begin{prop}
\label{linearstab}
Let $T\in\aut(\cp^k)$ be a linear transformation satisfying $T(C) = C$.  Then $T = T_\lambda$ for some $\lambda\in\C^*$, where $T_\lambda:[x_0,\dots,x_k]\mapsto [x_0,\lambda x_1,\dots \lambda^{k-1}x_{k-1},\lambda^{k+1}x_k]$
\end{prop}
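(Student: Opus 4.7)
The plan is to exploit the distinguished points on $C$ identified in Proposition~\ref{encurve}: the cusp at $\gamma(\infty)$ (the unique singular point of $C$) and the smooth inflection point at $\gamma(0)$ (which is also unique).  Because $T$ is a biregular self-map of $\cp^k$ preserving $C$, its restriction to $C$ is a biregular self-map that must send singularities to singularities and inflection points to inflection points.  Uniqueness forces $T(\gamma(\infty)) = \gamma(\infty)$ and $T(\gamma(0)) = \gamma(0)$.

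Next I would lift $T|_C$ through the bijective parametrization $\gamma:\cp^1\to C$ by setting $\phi := \gamma\inverse \circ T|_C\circ \gamma$.  Since $\gamma$ is a local biholomorphism on $\C$, the map $\phi$ is a bijection of $\cp^1$ that is holomorphic away from $\infty$, and by Riemann's removable singularity theorem it extends across $\infty$ to a M\"obius transformation.  Because $\phi$ fixes both $0$ and $\infty$, it must take the form $\phi(t) = \lambda t$ for some $\lambda\in\C^*$, and hence $T(\gamma(t)) = \gamma(\lambda t)$ for all $t\in\cp^1$.

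A direct substitution gives
$$
T_\lambda(\gamma(t)) = [1:\lambda t:\cdots:\lambda^{k-1}t^{k-1}:\lambda^{k+1}t^{k+1}] = \gamma(\lambda t) = T(\gamma(t)),
$$
so the linear automorphisms $T$ and $T_\lambda$ agree at every point of $C$.  By the second item of Proposition~\ref{encurve}, no hyperplane contains more than $k$ points of $C_{reg}$, so any $k+2$ smooth points of $C$ form a set in linear general position.  Since an element of $\aut(\cp^k)$ is determined by its action on such a configuration, I conclude $T = T_\lambda$.

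The main subtlety is the step showing $T$ fixes the inflection point $\gamma(0)$, which is what ultimately eliminates a translation term in $\phi$.  The argument above appeals to the projective invariance of inflection; a more computational alternative uses the group law (Proposition~\ref{grouplaw}): writing $\phi(t) = at+b$, the $t$-values of $H\cap C$ for any hyperplane $H$ with $\gamma(\infty)\notin H$ sum to zero, and those of $T(H)\cap C = \{\gamma(at_j+b)\}$ sum to $a\cdot 0 + (k+1)b$, which the group law likewise forces to vanish, giving $b = 0$.
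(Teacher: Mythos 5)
Your argument is correct and takes essentially the same route as the paper's own proof: both use the uniqueness of the cusp and of the smooth inflection point from Proposition~\ref{encurve} to force $T$ to fix $\gamma(\infty)$ and $\gamma(0)$, deduce that $T|_C$ lifts via $\gamma$ to $t\mapsto\lambda t$, and conclude $T=T_\lambda$ from the fact that $C$ is not contained in a hyperplane. Your closing remark deriving $T(\gamma(0))=\gamma(0)$ from the group law is a valid (and slightly more self-contained) alternative to invoking projective invariance of inflection points, but it does not change the structure of the argument.
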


\begin{proof}
That $T_\lambda(C) = C$ is easily checked.  On the other hand, if $T(C) = C$ for some $T\in\aut(C)$, then the facts that $T$ is a biholomorphism mapping lines to lines and that $C$ has a single cusp $\gamma(\infty)$ and a single inflection point $\gamma(0)$ imply that $T(\gamma(0)) = \gamma(0)$ and $T(\gamma(\infty)) = \gamma(\infty)$.  So $T|_{C_{reg}}$ corresponds via $\gamma$ to the linear map $t\mapsto \lambda t$ for some $\lambda$.  That is, $T|_{C}=T_\lambda|_C$.  Since $C$ is not contained in a hyperplane, we infer $T = T_\lambda$.
\end{proof}

\section{...to basic cremona maps...}
\label{S:basic cremona maps}

If $X\to \cp^k$ is a rational surface obtained by blowing up subvarieties of $\cp^k$, and $F_X:X\to X$ is a pseudoautomorphism, then $F_X$ descends to a birational map $F:\cp^k\dasharrow\cp^k$ on projective space.  Our method for constructing pseudoautomorphisms reverses this observation.  That is, we begin with an appropriate family of birational maps, and then we identify elements $F$ of the family that lift by blowup to pseudoautomorphisms $F_X$.  In this section we describe the family of birational maps we will use.

Let $J:\cp^k\dasharrow\cp^k$, given by $J[x_0,\dots,x_k]=x_0\dots x_k\cdot[1/x_0:\dots:1/x_k]$, be the standard cremona involution of degree $k$ on $\cp^k$.  Since $J$ is a monomial map, the algebraic torus $(\C^*)^k$ is totally invariant by $J$, and $J$ restricts to a biholomorphism (and group isomorphism) on this set.  The complement of $(\C^*)^k$ is the union of all coordinate hyperplanes $\{x_j=0\}$, and each of these is exceptional, contracted by $J$ to a point.  The indeterminacy set of $J$ is the union of all linear subspaces obtained by intersecting two or more coordinate hyperplanes.  For any non-empty set of indices $I\subset\{0,\dots,k\}$, we have $J(\bigcap_{i\in I} \{x_i=0\}) = \bigcap_{i\notin I} \{x_i=0\}$.

Given $S,T\in\aut(\cp^k)$, we will refer to $F := S\circ J\circ T^{-1}$ as a \emph{basic cremona map}.  The exceptional set of $F$ consists of the images $T(\{x_j=0\})$ of the coordinate hyperplanes, these are mapped by $F$ to the points $S(\ve_j)$.  The same is true, with $T$ and $S$ reversed, for $F^{-1}$.
Note that if $\Lambda\in\aut(\cp^k)$ is any `diagonal' transformation, preserving all coordinate hyperplanes, then $\Lambda\circ J = J\circ\Lambda^{-1}$.
Hence replacing $S$ and $T$ by $S\circ\Lambda$ and $T\circ \Lambda$ does not change $F$.  Otherwise, $S$ and $T$ are determined by $F$.

Note that for \emph{any} birational transformation $F:X\to Y$ one has a natural pullback (or pushforward) map $F^*:\div(Y)\to \div(Y)$ on divisors, and this descends to at map $F^*:\pic(X)\to\pic(Y)$ on linear equivalence classes. On the other hand, we will employ the convention for any hypersurface $H\subset Y$ that $F^{-1}(H) = \overline{F^{-1}(H-I(F^{-1}))}$ is the set-theoretic \emph{proper} transform of $H$.  One has that $F^{-1}(H)$ is irreducible
when $H$ is irreducible and that, as a reduced divisor, $F^{-1}(H) = F^*H$ precisely when no hypersurface contracted by $F$ has its image contained in $H$.

\begin{prop}
\label{hyperplane image}
Suppose that $F = S\circ J\circ T^{-1}$ is a basic cremona map. Let $H\subset\cp^k$ be a hyperplane that does not contain any of the points $T(\ve_j)$.  Then $F^{-1}(H) = F^*H$ is an irreducible degree $k$ hypersurface, and the multiplicity of $F^*H$ at any point $p\in I(F)\cap F^{-1}(H)$ is $\ell + 1$ where $\ell$ is the number of exceptional hyperplanes $T(\{x_j = 0\})$ containing $p$.
\end{prop}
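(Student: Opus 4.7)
The plan is to reduce to the case $S = T = \mathrm{id}$ (so $F = J$), using that $S, T \in \aut(\cp^k)$ are linear and hence preserve degree, irreducibility, and pointwise multiplicity of divisors. Writing $H = \{l = 0\}$ for a linear form $l$ and substituting $y = T^{-1}(x)$, one has $(l\circ F)(x) = (l\circ S)(J(y))$, so $F^*H = T_*(J^*H_1)$ where $H_1 := S^{-1}(H)$. The hypothesis that $H$ misses every $T(\ve_j)$ then translates (modulo what I believe is a typo exchanging $S \leftrightarrow T$ in the statement; see below) into the condition that the defining form $l_1 := l\circ S = \sum a_j x_j$ of $H_1$ has all coefficients $a_j = l_1(\ve_j)$ nonzero, equivalently $H_1$ contains no coordinate vertex $\ve_j$.

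In that case $J^*H_1$ is the divisor of the degree-$k$ polynomial
$$
P(x) \;=\; \sum_{j=0}^k a_j \prod_{i\neq j} x_i.
$$
For irreducibility I would argue on the torus $(\C^*)^k$: since $J$ restricts to a biregular self-map of the torus, and $H_1\cap(\C^*)^k$ is a dense open subset of the irreducible hyperplane $H_1$ (denseness uses $a_j\neq 0$, which keeps $H_1$ out of every coordinate hyperplane), $J^{-1}(H_1)\cap(\C^*)^k$ is irreducible, and its closure is an irreducible degree-$k$ hypersurface in $\cp^k$. Setting $x_i = 0$ in $P$ leaves the single nonzero term $a_i\prod_{j\neq i} x_j$, so no coordinate hyperplane appears in $J^*H_1$; hence $J^*H_1$ is this irreducible closure, and correspondingly $F^*H = F^{-1}(H)$.

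For the multiplicity at $p \in I(J) \cap J^{-1}(H_1)$, I would set $I := \{j : p_j = 0\}$ with $\ell := |I| \geq 2$, choose any $m\notin I$, and work in the affine chart $x_m = 1$ using coordinates $y_j = x_j$ for $j\in I$ and $y_j = x_j - p_j$ for $j\notin I\cup\{m\}$. Each monomial $a_j\prod_{i\neq j}x_i$ of $P$ contributes a factor $\prod_{i\in I\setminus\{j\}} y_i$ in the small variables: this factor has degree $\ell$ if $j\notin I$ and degree $\ell-1$ if $j\in I$. Hence the leading homogeneous part of $P$ near $p$ is
$$
\sum_{j \in I} a_j \Big( \prod_{\substack{i \notin I \\ i \neq j,\, m}} p_i \Big) \prod_{i \in I \setminus \{j\}} y_i,
$$
a homogeneous polynomial of degree $\ell-1$ in the variables $\{y_i\}_{i\in I}$. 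Since the $a_j$ (for $j\in I$) and the $p_i$ (for $i\notin I$) are all nonzero and the $\ell$ monomials are distinct as $j$ varies, this leading polynomial is nonzero, so the multiplicity at $p$ equals $\ell-1$. Applying $T$ transports this statement unchanged to any point $p \in I(F)$.

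The only real obstacle is keeping indices straight in the multiplicity calculation; nothing is conceptually subtle. It is worth flagging, however, that the number I obtain is $\ell-1$ rather than the stated $\ell+1$: small cases like $k=2$ with a generic line $H$ and $p=\ve_2$ (so $\ell=2$) produce a smooth conic through $\ve_2$, i.e.\ multiplicity $1$, and similarly $k=3$ with $p=\ve_3$ gives $\ell=3$ and multiplicity $2$. So the stated exponent is almost certainly a typo for $\ell-1$.
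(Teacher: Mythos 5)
Your computation is correct, and since the paper's proof of this proposition consists of the single line ``This may be computed directly,'' your argument is exactly the direct computation the authors intend. Both of your corrections to the statement are right and are corroborated by the one place the proposition is used: in the proof of Proposition \ref{rightdegree} the hyperplane is generic, so it avoids the points $S(\ve_j)$ (which is the hypothesis actually needed, these being the images of the hypersurfaces contracted by $F$), and there $F^*H$ is assigned multiplicity $k-1$ at each $T(\ve_j)$, a point lying on exactly $\ell=k$ exceptional hyperplanes --- that is, multiplicity $\ell-1$, matching your answer rather than the stated $\ell+1$.
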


\begin{proof}
This may be computed directly.
\end{proof}

We will say that $F$ is \emph{centered on $C$} if all points $T(\ve_j)$ lie in $C_{reg}$.

\begin{prop}
\label{exceptionalpts}
Suppose that $F = S\circ J\circ T^{-1}$ is a basic cremona transformation centered on $C$.  
\begin{itemize}
 \item The cusp $\gamma(\infty)$ of $C$ lies outside the exceptional set of $F$
 \item For each $j$, there exists at most one point $p\notin \{T(\ve_j):0\leq j\leq k\}$ where $C$ meets the exceptional hyperplane $T(\{x_j=0\})$. If $p$ exists, it is not contained in any other exceptional hyperplane; and the intersection between $C$ and $T(\{x_j=0\})$ is transverse at $p$. 
 \item If $p$ does not exist, then $C$ is tangent to $T(\{x_j=0\})$ at some point $T(\ve_i)$, $i\neq j$.  In this case $C$ meets all other exceptional hyperplanes transversely at $T(\ve_i)$. 
 \end{itemize}
 In particular $C\cap I(F) = \{T(\ve_j):0\leq j \leq k\}$.
\end{prop}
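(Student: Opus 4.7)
The plan is to work entirely on $C$ via Proposition~\ref{grouplaw} and the intersection number $H_j \cdot C = k+1$, where $H_j := T(\{x_j=0\})$. Let $a_i \in \mathbf{C}$ satisfy $T(\ve_i) = \gamma(a_i)$; the $a_i$ are distinct, and $\gamma(a_i) \in H_j$ if and only if $i \neq j$. For the first bullet, if the cusp $\gamma(\infty)$ lay in $H_j$ it would contribute at least $2$ to $H_j \cdot C$ (being an ordinary double point) while the $k$ points $\gamma(a_i)$ with $i \neq j$ contribute at least $k$; the total would exceed $k+1$. Hence $\gamma(\infty) \notin H_j$, and $H_j|_C$ is an effective divisor of degree $k+1$ supported on $C_{reg}$. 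Writing $H_j|_C = \sum_{i\neq j} m_i [\gamma(a_i)] + \mu$ with $m_i \geq 1$ and $\mu$ effective and supported off $\{\gamma(a_i)\}_{i\neq j}$ forces $\deg\mu \in \{0,1\}$, yielding two mutually exclusive possibilities: \textbf{Case A} where $\deg\mu=1$, all $m_i=1$, and $\mu = [\gamma(b_j)]$; or \textbf{Case B} where $\deg\mu=0$ and exactly one $m_l = 2$, $l \neq j$, the rest being $1$. This already gives the ``at most one extra point/tangency'' dichotomy.

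For the second bullet, in Case A Proposition~\ref{grouplaw} pins down $b_j = -\sum_{i\neq j} a_i$. Case A forbids $b_j = a_i$ for $i \neq j$, and $b_j = a_j$ would force $T(\ve_j) \in H_j$ (impossible, since the $j$th coordinate of $\ve_j$ is $1$), so $p := \gamma(b_j) \notin \{T(\ve_l)\}$; transversality at $p$ is built into the case. To see $p \notin H_l$ for $l \neq j$, apply the dichotomy to $H_l$: if $H_l$ is Case A, $p \in H_l$ gives $b_j = b_l$ and hence $a_j = a_l$, impossible; if $H_l$ is Case B with tangency at $\gamma(a_n)$, then $p \in H_l$ forces $b_j = a_i$ for some $i \neq l$, again contradicting the previous step.

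For the third bullet, in Case B the group law applied to $H_j|_C = 2[\gamma(a_l)] + \sum_{i\neq j,l}[\gamma(a_i)]$ yields $a_l + \sum_{i\neq j} a_i = 0$. If another $H_m$ with $m \neq j$ were also tangent to $C$ at $\gamma(a_l)$, then $m \neq l$ (since $T(\ve_l) \notin H_l$) and symmetrically $a_l + \sum_{i\neq m} a_i = 0$; subtracting gives $a_m = a_j$, impossible. So every other $H_m$ meets $C$ transversely at $T(\ve_l)$. The concluding assertion $C \cap I(F) = \{T(\ve_j)\}$ now follows: each $T(\ve_l)$ clearly lies in $H_i \cap H_m$ for any two indices $i \neq m$ distinct from $l$, hence in $I(F)$; and any $p \in C \cap H_i \cap H_j$ that is not one of the $T(\ve_l)$ would have to be the Case A extra point of $H_i$, which by the second bullet lies in no other $H_j$. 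The main difficulty is really just bookkeeping: juggling the Case A/Case B dichotomy across different exceptional hyperplanes and invoking the group law, which once set up reduces every substep to a one-line computation in $\pic_0(C) \cong \mathbf{C}$.
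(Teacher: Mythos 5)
Your proof is correct, and the dichotomy at its core (the budget $T(\{x_j=0\})\cdot C = k+1$ forces either one transverse extra point or one tangency among the $T(\ve_i)$) is exactly the paper's opening move. Where you genuinely diverge is in the disjointness claims: to show the extra point $p$ lies in no other exceptional hyperplane, and that no two hyperplanes can be tangent at the same $T(\ve_i)$, the paper invokes the linear general position statement of Proposition~\ref{encurve} (a codimension-two subspace $T(\{x_j=x_\ell=0\})$ of dimension $k-2$ would otherwise contain $k$ points of $C_{reg}$ counted with multiplicity, exceeding the bound $\dim L + 1$), whereas you invoke the group law of Proposition~\ref{grouplaw} to pin down the parameter of the extra point as $b_j=-\sum_{i\neq j}a_i$ (and the tangency relation $a_l+\sum_{i\neq j}a_i=0$) and derive contradictions from equalities of parameter sums. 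Both are legitimate, since you first rule the cusp off every exceptional hyperplane so the group law applies to each hyperplane section. The paper's route is more robust, using only that $C$ is in linear general position and nothing about its group structure; your route costs a bit more case bookkeeping but produces the explicit formula for the extra point's parameter, which is precisely the computation the paper redoes later in the proof of Theorem~\ref{good br maps}, so nothing is wasted. You also supply two details the paper leaves implicit: the explicit multiplicity-count showing $\gamma(\infty)$ avoids every exceptional hyperplane, and the verification of the concluding identity $C\cap I(F)=\{T(\ve_j)\}$.
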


\begin{proof}
The curve $C$ intersects the exceptional hyperplane $T(\{x_j= 0\})$ at $k+1$ points counting multiplicity.  By hypothesis these include the $k$ points $T(\ve_i)$ for all $i\neq j$, and none of these are the cusp of $C$.  Consequently, either $C$ is tangent to $T(\{x_j=0\})$ at one of the points $T(\ve_i)$, or $C$ meets $T(\{x_j=0\})$ transversely at exactly one other point $p$.  

In particular, if $p$ exists, then it cannot be the cusp of $C$.  Moreover, $p$ cannot lie in another exceptional hyperplane $T(\{x_\ell=0\})$, $\ell\neq j$, because then the $k-2$-dimensional subspace $T(\{x_\ell=x_j=0\})$ would contain $k$ points: $p$ and $T(\ve_m)$ for all $m\neq \ell,j$, contradicting Proposition \ref{encurve}.

The same argument shows that when $p$ does not exist and $C$ is tangent to $T(\{x_j=0\})$ at $T(\ve_i)$, then $C$ cannot be tangent to any other exceptional hyperplane at $T(\ve_i)$.
\end{proof}

Proposition \ref{exceptionalpts} gives us information about the image of $C$ under $F$.

\begin{cor}
\label{centered}
Let $F$ be as in the previous proposition and let $C' = F(C)$.  Then 
\begin{itemize}
 \item $F$ maps the cusp of $C$ to a cusp of $C'$; and 
 \item $S(\ve_j)\in C'$ for all $j$.
\end{itemize}
In particular $C'$ is not contained in any hyperplane.
\end{cor}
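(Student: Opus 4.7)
The plan is to handle the three bullets in order. For the first, I invoke Proposition~\ref{exceptionalpts}: it says $C \cap I(F) = \{T(\ve_j)\}_j$, so the cusp $\gamma(\infty)$ lies outside both $I(F)$ and every exceptional hyperplane. Consequently $F$ is a local biholomorphism near $\gamma(\infty)$, and since ``ordinary cusp'' is a local holomorphic invariant, $F(\gamma(\infty))$ is an ordinary cusp of $C' = F(C)$.

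For the second bullet, fix $j$ and let $\Sigma_j := T(\{x_j = 0\})$, which $F$ contracts to $S(\ve_j)$. I will split along the dichotomy in Proposition~\ref{exceptionalpts}. In the first case, there is a point $p \in C_{reg}$, distinct from every $T(\ve_m)$, where $C$ crosses $\Sigma_j$ transversely and sits on no other exceptional hyperplane; then $p \notin I(F)$ and directly $F(p) = S(\ve_j) \in C'$. In the second case, $C$ is tangent (with multiplicity two) to $\Sigma_j$ at some $T(\ve_i)$, $i \ne j$, meeting every other exceptional hyperplane transversely there. My plan here is to pass to the affine chart $\{y_i = 1\}$ after applying $T^{-1}$, parametrize $C$ locally by $t \mapsto \gamma(t)$ with $\gamma(0) = T(\ve_i)$, and record that tangency and transversality give $\ord_0 y_j = 2$ and $\ord_0 y_m = 1$ for each $m \ne i, j$. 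Using the formula $J[y_0 : \cdots : y_k] = [\prod_{m \ne \ell} y_m]_\ell$ and tallying orders of vanishing, the $j$-th projective coordinate of $J \circ T^{-1} \circ \gamma(t)$ comes out to order exactly $k-1$, while every other coordinate has order at least $k$. Dividing through by $t^{k-1}$ then forces $J \circ T^{-1} \circ \gamma(t) \to \ve_j$ as $t \to 0$, so $F(\gamma(t)) \to S(\ve_j) \in C'$.

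For the third bullet, the preceding work exhibits $S(\ve_0), \dots, S(\ve_k)$ as $k+1$ points of $C'$. Since $S \in \aut(\cp^k)$ sends the standard basis to a projectively independent configuration, no hyperplane of $\cp^k$ contains all of them, and hence none contains $C'$.

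The one step demanding real attention is the tangency subcase of the second bullet, where $F$ is genuinely indeterminate at the approach point $T(\ve_i)$. The content is to show that the branch of $C$ through $T(\ve_i)$ is carried by $F$ to the specific vertex $S(\ve_j)$ rather than to some other $S(\ve_\ell)$; the order-of-vanishing bookkeeping---order $k-1$ in slot $j$, order $\ge k$ elsewhere---is what pins down $\ve_j$ as the limit.
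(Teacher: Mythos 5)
Your proof is correct and follows the same route as the paper: regularity of $F$ at the cusp for the first bullet, the transversal/tangent dichotomy from Proposition~\ref{exceptionalpts} for the second, and projective independence of the $S(\ve_j)$ for the third. The order-of-vanishing computation in the tangency subcase (order $k-1$ in slot $j$, order $\ge k$ elsewhere) is exactly the step the paper dismisses with ``one computes readily,'' so you have simply supplied the omitted detail.
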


\begin{proof}
The first conclusion follows from the fact that $F$ is regular near the cusp $\gamma(\infty)$.

If $C$ meets an exceptional hyperplane $T(\{x_j=0\})$ at a point $p$ not contained in another coordinate hyperplane, then $F(p) = S(J(\{x_j=0\})) = S(\ve_j) \in C'$.  Similarly, one computes readily that if $C$ is tangent to $\{x_j=0\}$ at $\ve_i$, then $(F\circ\gamma)(t) = S(\ve_j) \in C'$, where $\gamma(t) = T(\ve_j)$. 

The final assertion follows from independence of the points $S(\ve_j)$.
\end{proof}

\begin{prop}
\label{rightdegree}
Let $F$ and $C'$ be as in Corollary \ref{centered}. Then $\deg C' = k+1$. 
\end{prop}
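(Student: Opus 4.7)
The plan is to compute $\deg C'$ directly by parametrizing $C$ via $\gamma$ and analyzing the composed rational map $F\circ\gamma:\cp^1\to\cp^k$. Because $F$ is birational and $C$ is not contained in the exceptional locus of $F$ (it meets each exceptional hyperplane in finitely many points by Proposition \ref{exceptionalpts}), the restriction $F|_C:C\to C'$ is generically one-to-one. Hence $\deg C'$ equals the degree of $F\circ\gamma$ as a rational map, i.e.\ the largest degree among its coordinate polynomials once the greatest common factor has been cancelled.

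Since $F$ is defined by polynomials of degree $k$ and $\gamma$ has degree $k+1$, the coordinates of $F\circ\gamma$ are \emph{a priori} polynomials of degree $k(k+1)$. To make the cancellation explicit, I would factor $F=S\circ J\circ T^{-1}$ and set $\tilde\gamma:=T^{-1}\circ\gamma$. Each coordinate $\tilde\gamma_i(t)$ is a linear combination of $\{1,t,\dots,t^{k-1},t^{k+1}\}$, and its leading coefficient is the $(i,k)$-entry of $T^{-1}$. This entry is nonzero precisely because the cusp $\gamma(\infty)$ lies off every exceptional hyperplane $T(\{x_i=0\})$ (Proposition \ref{exceptionalpts}), so $\deg\tilde\gamma_i=k+1$ for each $i$. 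Since each $T(\ve_j)$ lies in $C_{reg}$, there are distinct parameters $t_0,\dots,t_k\in\C$ with $\tilde\gamma(t_j)=\ve_j$; hence $\tilde\gamma_i(t_m)=0$ whenever $m\neq i$, and Proposition \ref{exceptionalpts} says each such root is simple except that at most one $t_m$ can be a double root of $\tilde\gamma_i$ (corresponding to a tangency between $C$ and $T(\{x_i=0\})$).

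The heart of the argument is then a GCD computation for the $k+1$ polynomials $(J\circ\tilde\gamma)_j=\prod_{l\neq j}\tilde\gamma_l(t)$, each of degree $k(k+1)$. At each $t_m$, summing the vanishing orders of the factors shows that every coordinate of $J\circ\tilde\gamma$ vanishes to order at least $k-1$; Proposition \ref{exceptionalpts}'s bound that at most one exceptional hyperplane can be tangent to $C$ at $T(\ve_m)$ then yields equality for at least one index $j$, so the contribution of $t_m$ to the GCD is exactly $(t-t_m)^{k-1}$. One also verifies that no further common zeros occur elsewhere (for instance at the remaining root of $\tilde\gamma_i$, where only a single coordinate of $J\circ\tilde\gamma$ vanishes, or near $t=\infty$ where $\tilde\gamma$ approaches the cusp and all $\tilde\gamma_i$ are nonvanishing). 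Thus the GCD is exactly $\prod_{m=0}^k(t-t_m)^{k-1}$, of degree $k^2-1$, and cancelling it from $F\circ\gamma=S\circ J\circ\tilde\gamma$ (the linear $S$ preserves degrees) leaves a map of degree $k(k+1)-(k^2-1)=k+1$, giving $\deg C'=k+1$. The main obstacle is the careful vanishing-order bookkeeping in this GCD step, particularly distinguishing the transverse and tangent cases of Proposition \ref{exceptionalpts}.
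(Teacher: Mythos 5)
Your argument is correct, and it takes a genuinely different route from the paper's. The paper splits the statement into two one-sided bounds: for $\deg C'\ge k+1$ it observes that $C'$ spans $\cp^k$ (else the components of $J\circ T^{-1}\circ\gamma$ would be linearly dependent) and that a degree-$k$ spanning curve would be a rational normal curve, hence smooth, contradicting the cusp of $C'$; for $\deg C'\le k+1$ it pulls back a generic hyperplane $H$ meeting $C'$ transversely and uses the multiplicity of $F^*H$ at the points $T(\ve_j)$ to bound $\#F^{-1}(H\cap C')$ by $(k+1)\deg F^*H-(k-1)(k+1)=k+1$. You instead compute the degree exactly in one pass, by writing down the parametrization $S\circ J\circ T^{-1}\circ\gamma$ and determining the GCD of its coordinates; the divisor count $(k-1)(k+1)$ that the paper imports from Proposition \ref{hyperplane image} is re-derived in your argument as explicit root-counting, and the ``at most one tangency'' clause of Proposition \ref{exceptionalpts} is what lets you pin the local GCD exponent down to exactly $k-1$ rather than merely $\ge k-1$. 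What you buy is a self-contained, elementary computation that never needs the separate lower-bound step (and that, as a bonus, exhibits the reduced degree-$(k+1)$ parametrization of $C'$); what the paper's route buys is brevity, since one-sided estimates suffice there. One small slip of the pen: at the extra transverse root $s_i$ of $\tilde\gamma_i$, it is not true that ``only a single coordinate of $J\circ\tilde\gamma$ vanishes'' --- in fact all coordinates $(J\circ\tilde\gamma)_j$ with $j\ne i$ vanish there; what matters, and what your argument actually uses, is that the single coordinate $(J\circ\tilde\gamma)_i=\prod_{l\ne i}\tilde\gamma_l$ does \emph{not} vanish, so $s_i$ contributes nothing to the GCD. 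The conclusion stands.
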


\begin{proof}
We have $\deg C'\geq k$, because otherwise the $k+1$ components of $J\circ T^{-1}\circ\gamma(t)$ would all be polynomials of degree smaller than $k$ and therefore dependent. That is, $C'$ would lie in a hyperplane, contrary to the previous propositon.  Moreover, because $C'$ has a cusp, $C'$ is not a rational normal curve.  Therefore $\deg C' \geq k+1$.  

For the reverse inequality, let $H\subset \cp^k$ be a hyperplane that meets $C'$ transversely at a set $K$ of distinct regular points outside the exceptional hyperplanes for $F^{-1}$.  Then $\deg C' = \# K = \#F^{-1}(K)$.  Moreover, all points $F^{-1}(K)$ are regular for $C$, and none lies in an exceptional hyperplane of $F$. Thus we may use Proposition \ref{hyperplane image} to compute
$$
F^*H|_{T^{-1}(C)} \geq \sum_{p\in F^{-1}(K)} [p] + (k-1)\sum [T(\ve_j)],
$$
and then infer 
$$
\deg C' = \# F^{-1}(K) \leq (k+1)\deg F^*H - (k-1)(k+1) = k+1.
$$
\end{proof}

From Propositions \ref{encurve} and \ref{rightdegree} and Corollary \ref{centered}, we immediately obtain

\begin{cor}
\label{centeredimage}
Let $F$ be a basic cremona map centered on $C$.  Then $F(C)$ is an elliptic normal curve and $F^{-1}$ is centered on $F(C)$.
\end{cor}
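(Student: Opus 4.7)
The plan is to chain together the three preceding results with only very minor additional argument. First, Corollary \ref{centered} together with Proposition \ref{rightdegree} already ensures that $C' := F(C)$ has degree $k+1$, contains a cusp at $F(\gamma(\infty))$, and is not contained in any hyperplane. Applying the third bullet of Proposition \ref{encurve}, any such curve is $T'(C)$ for some $T' \in \aut(\cp^k)$, so $C'$ is itself an elliptic normal curve. This establishes the first assertion.

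For the second assertion, the task is to verify that the points $S(\ve_0), \dots, S(\ve_k)$—which play for $F^{-1} = T \circ J \circ S^{-1}$ the same role that the points $T(\ve_j)$ play for $F$—all lie in the regular part $C'_{reg}$. Corollary \ref{centered} places each $S(\ve_j)$ on $C'$, so it suffices to check that none of these points is the unique cusp of $C'$. Since $C'$ has only one cusp (by Proposition \ref{encurve} applied to $C'$) and $F$ sends the cusp of $C$ to a cusp of $C'$ (again by Corollary \ref{centered}), the cusp of $C'$ is precisely $F(\gamma(\infty))$.

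Finally, the fiber $F^{-1}(S(\ve_j))$ equals $T(\{x_j = 0\}) \setminus I(F)$, and Proposition \ref{exceptionalpts} asserts that $\gamma(\infty)$ lies outside every exceptional hyperplane $T(\{x_j=0\})$ of $F$. Therefore $F(\gamma(\infty)) \neq S(\ve_j)$ for every $j$, so each $S(\ve_j)$ lies in $C'_{reg}$, and $F^{-1}$ is centered on $C'$. There is no real obstacle here; the argument is essentially bookkeeping, and the only subtle point is confirming that the cusp of $C'$ is separated from each $S(\ve_j)$, which follows directly from the position of $\gamma(\infty)$ relative to the exceptional hyperplanes of $F$.
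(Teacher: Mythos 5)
Your argument is correct and follows exactly the route the paper intends: the paper derives this corollary "immediately" from Propositions \ref{encurve} and \ref{rightdegree} and Corollary \ref{centered}, and your write-up simply fills in the details, including the one point the paper leaves implicit (that no $S(\ve_j)$ can be the cusp of $F(C)$, since the cusp of $C$ avoids the exceptional hyperplanes by Proposition \ref{exceptionalpts}). No issues.
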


Let us say that a basic cremona map $F$ \emph{properly fixes $C$} if $F$ is centered on $C$ and $F(C) = C$.  Note that then $F$ induces an automorphism on $C$ which corresponds via $\gamma$ to an affine transformation $t\mapsto\delta t + \tau$ for some $\delta\in\C^*$ and $\tau \in \C$.   We now arrive at the main result of this section.   A consequence of the group law (Proposition \ref{grouplaw}), it gives us a good family of birational maps to work with when looking for pseudoautomorphisms.

\begin{thm}
\label{good br maps}
Suppose $\delta\in\C^*$ and $t^+_j\in\C$, $0\leq j\leq k$, are distinct parameters satisfying $\sum t_j^+ \neq 0$.  Then there exists a unique basic cremona map $F=S\circ J\circ T^{-1}$ and $\tau\in\C$ such that
\begin{itemize}
 \item $F$ properly fixes $C$ with $F|_C$ given by $F(\gamma(t)) = \gamma(\delta t + \tau)$. 
 \item $\gamma(t_j^+) = T(\ve_j)$ for each $0\leq j\leq k$. 
\end{itemize}
Specifically, 
\begin{itemize}
 \item $\tau = \frac{k-1}{k+1}\delta\sum t_j^+$; and 
 \item $S(\ve_j) = \gamma\left(\delta t_j^+ - \frac{2\tau}{k-1}\right)$.
\end{itemize}
\end{thm}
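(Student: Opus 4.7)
The plan is to derive the formulas for $\tau$ and for the parameters $s_j$ defined by $S(\ve_j) = \gamma(s_j)$ via a uniqueness argument rooted in the group law (Proposition \ref{grouplaw}), and then to verify that these formulas actually produce a basic cremona map with the required properties via a direct computation with parametrizations.

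For uniqueness, suppose such an $F$ exists. Fix $j$; by Proposition \ref{exceptionalpts} the exceptional hyperplane $T(\{x_j = 0\})$ meets $C$ in the $k$ points $\gamma(t_i^+)$ for $i \ne j$, plus one more intersection of multiplicity one. Applying the group law to this intersection divisor, the extra parameter is forced to be $r_j := t_j^+ - \Theta$ where $\Theta := \sum_i t_i^+$. In the generic case where $r_j \ne t_i^+$ for every $i$, the point $\gamma(r_j)$ lies outside $I(F)$, so $F(\gamma(r_j)) = S(\ve_j)$ forces $s_j = \delta r_j + \tau = \delta(t_j^+ - \Theta) + \tau$. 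Running the same argument on $F^{-1}$ (legitimized by Corollary \ref{centeredimage}) yields $s_j = \delta t_j^+ + \Sigma + \tau$ with $\Sigma := \sum_\ell s_\ell$, hence $\Sigma = -\delta\Theta$; summing the first expression for $s_j$ over $j$ then forces $\tau = \frac{k-1}{k+1}\delta\Theta$, and the stated expression for $s_j$ follows. Degenerate cases where $r_j$ collides with some $t_i^+$ are handled either by perturbing the $t_i^+$ and taking a limit, or more cleanly by appealing to the purely algebraic existence argument below.

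For existence, define $\tau$ and $s_j$ by the stated formulas; Proposition \ref{encurve} guarantees that both $\{\gamma(t_j^+)\}$ and $\{\gamma(s_j)\}$ are in general linear position, so one can choose $T, S \in \aut(\cp^k)$ with $T(\ve_j) = \gamma(t_j^+)$ and $S(\ve_j) = \gamma(s_j)$. A direct Lagrange-style computation, using the fact that the components of $\gamma$ span $\{1, t, \dots, t^{k-1}, t^{k+1}\}$, shows $T^{-1}(\gamma(t)) = [Q_0(t) : \cdots : Q_k(t)]$ with $Q_i(t) = b_i(t-r_i)\prod_{j \ne i}(t - t_j^+)$ for some $b_i \in \C^*$; applying $J$ and canceling the common factor $\prod_j(t-t_j^+)^{k-1}$ yields the reduced vector $[A_0(t) : \cdots : A_k(t)]$ with $A_i(t) = B_i(t - t_i^+)\prod_{\ell \ne i}(t - r_\ell)$. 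The crux is then to rewrite each $A_i$ as a polynomial in $s := \delta t + \tau$ and compute its $s^k$-coefficient: a direct expansion produces $(k-1)\delta\Theta - (k+1)\tau$, which vanishes precisely for the prescribed $\tau$. Thus each $A_i$ lies in the subspace $V := \mathrm{span}\{1, s, \dots, s^{k-1}, s^{k+1}\}$, and evaluating at $t = r_j$ (where only $A_j$ is nonzero) shows the $A_i$ form a basis of $V$, so a unique matrix $S'$ satisfies $S' \cdot A(t) = \gamma(L(t))$. Comparing the two sides at $t = r_j$ verifies $S'(\ve_j) = \gamma(s_j)$, and tracking the residual diagonal ambiguity in $T$ shows that $F = S' \circ J \circ T^{-1}$ is independent of choices, which simultaneously completes existence and uniqueness.

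The main obstacle I anticipate is the $s^k$-coefficient computation itself, which must collapse to an expression \emph{independent of $i$} for the scheme to work at all, together with the bookkeeping needed to confirm that the compensating diagonal ambiguities in $T$ and $S$ really do cancel into a scalar on $F$.
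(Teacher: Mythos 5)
Your proposal is correct, and while the derivation of the formulas is a close cousin of the paper's, your existence and uniqueness arguments take a genuinely different route. For the formulas, the paper pulls back a \emph{generic} hyperplane (whose restriction to $C$ has parameter-sum zero, with the indeterminacy points appearing with multiplicity $k-1$) to get $\tau$, and then uses the exceptional hyperplanes to get $S(\ve_j)$; you instead play the exceptional hyperplanes of $F$ against those of $F^{-1}$ to produce two expressions for $s_j$, and the comparison yields both $\tau$ and $s_j$ at once. Your degenerate case (the extra intersection point $\gamma(r_j)$ colliding with some $T(\ve_i)$, i.e.\ a tangency) is exactly the situation treated in Proposition \ref{exceptionalpts} and Corollary \ref{centered}, and since $F|_C$ is the globally defined affine map $t\mapsto\delta t+\tau$ the identity $F(\gamma(r_j))=S(\ve_j)$ survives the collision, so your fallback is not really needed. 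For existence, the paper argues abstractly: it invokes the classification of cuspidal degree-$(k+1)$ curves (third item of Proposition \ref{encurve}) to produce $S$ with $S\circ J\circ T^{-1}(C)=C$ and then normalizes the multiplier using the stabilizer $T_\lambda$ of $C$ (Proposition \ref{linearstab}); its uniqueness argument is geometric, tracking hyperplanes through $k-1$ of the points $\gamma(t_j^+)$. Your Lagrange-style computation is more explicit and self-contained: the factorizations $Q_i(t)=b_i(t-r_i)\prod_{j\ne i}(t-t_j^+)$ and $A_i(t)=B_i(t-t_i^+)\prod_{\ell\ne i}(t-r_\ell)$ are right, the sum of the roots of $A_i$ is $(1-k)\Theta$ independently of $i$, so the $s^k$-coefficient obstruction is $(k-1)\delta\Theta-(k+1)\tau$ for every $i$ and the prescribed $\tau$ is exactly what makes all the $A_i$ land in $\mathrm{span}\{1,s,\dots,s^{k-1},s^{k+1}\}$; evaluation at the distinct points $r_j$ (using $\Theta\ne 0$ and $t_j^+\ne t_\ell^+$) gives independence, and the diagonal ambiguity $T\mapsto T\Lambda$, $S\mapsto S\Lambda^{-1}$ cancels in $F$ since $J\circ\Lambda^{-1}=\Lambda\circ J$. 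Two small corrections: the linear independence of the $\gamma(t_j^+)$ should be attributed to the group law together with $\sum t_j^+\ne 0$ (a hyperplane through all $k+1$ points would force $\sum t_j^+=0$), not to the second item of Proposition \ref{encurve}; and for full uniqueness you should note explicitly that once $T(\ve_j)$ and $F|_C$ are fixed, $S$ is determined on the elliptic normal curve $J\circ T^{-1}(C)$, which spans $\cp^k$, so $S$ and hence $F$ are determined.
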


Note that the points $T(\ve_j)$ and $S(\ve_j)$ \emph{almost} determine $T$ and $S$ (and therefore $F$).  Below, it will be convenient to use invariance of the cusp to eliminate the remaining ambiguity.

\begin{proof} 
For existence of $F$, we infer from the condition $\sum t_j^+\neq 0$ and Proposition \ref{grouplaw} that the points $\gamma(t_j^+)$ are independent in $\cp^k$.  Therefore, there exists
$T\in\aut(\cp^k)$ such that $T(\ve_j) = \gamma(t_j^+)$.  Then $J\circ T$ is a basic cremona map centered on $C$, and $J\circ T(C)$ is an elliptic normal curve.  So by Proposition \ref{encurve} there exists $S\in\aut(\cp^k)$ such that $F = S\circ J\circ T(C) = C$.  The restriction $F|_C$ is given by $F(\gamma(t)) = \gamma(\alpha t + \tau)$ for some $\alpha\in\C^*$ and $\tau\in C$.  Replacing $S$ with $T_\lambda\circ S$, $\lambda := \delta/\alpha$, we may assume that $\alpha = \delta$.  Thus $F$ is the basic cremona map we seek.

For uniqueness and the remaining assertions about $F$, suppose we are given $S, T\in\aut(\cp^k)$ such that $F = S\circ J\circ T^{-1}$ satisfies the given conditions.  If $H\subset \cp^k$ is a generic hyperplane, then $H$ meets $C$ in $k+1$ distinct points $\{\gamma(t_0),\dots,\gamma(t_k)\}$ such that $\sum t_j = 0$, and none of these points lies in the exceptional set of $F$.  Hence $F^{-1}(\gamma(t_j)) = \gamma((t_j-\tau)/\delta)$.  Also, $F^*H = F^{-1}(H)$ is a hypersurface of degree $k$ that contains with multiplicity $k-1$ all points $T(\ve_j) = \gamma(t_j^+)$ that are images of exceptional hyperplanes for $F^{-1}(H)$.  This accounts for all $k(k+1)$ points of intersection between $C$ and $F^{-1}(H)$.  Using Proposition \ref{grouplaw} we convert this to the following relationship among parameters
$$
0 = \sum \frac{t_j - \tau}{\delta} + k\sum t_j^+ = -\frac{(k+1)\tau}{\delta} + (k-1)\sum t_j^+. 
$$
That is, $\tau = \frac{k-1}{k+1}\delta\sum t_j^+$.

Now consider an exceptional hyperplane $H = T(\{x_j=0\})$ for $F$.  We have $H\cap C = \{\gamma(t_i^+):i\neq j\} \cup \{p_j\}$, where $p_j = \gamma(s_j)$ is as in Proposition \ref{exceptionalpts}.  Thus on the one hand, $F(p_j) = S(\ve_j) = \gamma(\delta s_j + \tau)$, and on the other hand Proposition \ref{grouplaw} gives us that $s_j = -\sum_{i\neq j} t_i^+ = t_j^+ - \frac{k+1}{k-1}\frac\tau\delta$.  So we arrive at
$$
S(\ve_j) = \gamma\left(\delta t_j^+ - \frac{2\tau}{k-1}\right).
$$

To see that the above information completely determines $F$, let $p\in\cp^k$ be a generic point and $H$ be the hyperplane through $p$ and $k-1$ points $\gamma(t_j^+)$. Then $F(H)$ is a hyperplane containing the corresponding set of $k-1$ points $p_j = S(\ve_j)$.  In addition $H\cap C$ contains two more points $\gamma(s_1), \gamma(s_2) \in C$, so $F(H)$ contains the images $\gamma(\delta s_1 + \tau), \gamma(\delta s_2+\tau)$.  The points $p_j$ and $\gamma(\delta s_j+\tau)$ more than suffice to determine $F(H)$.  

By varying the set of $k-1$ points $\gamma(t_j^+)$ used to determine $H$, we can find a collection of $k$ hyperplanes that intersect uniquely at $p$ and whose images are also hyperplanes intersecting uniquely at $F(p)$.  Since the image hyperplanes are completely determined by the data $t_j^+$, $\delta$, we see that $F(p)$ is uniquely determined by the same data. 
\end{proof}

\section{...to pseudoautomorphisms...}\label{S:pseudo}
A birational map $F:X\to Y$ is a \emph{pseudoautomorphism} if neither $F$ nor $F^{-1}$ contracts hypersurfaces.  Equivalently, $F^{\pm 1}$ have trivial critical divisors.
When combined with the following additional observation, Theorem \ref{good br maps} allows us to create many pseudoautomorphisms.

\begin{prop}
\label{pseudoaut}
Let $F:\cp^k\dasharrow\cp^k$ be a basic cremona map properly fixing $C$.  Suppose there is a permuation $\sigma:\{0,1,\dots,k\}\self$ and, for each point $S(\ve_j)\in I(F^{-1})$, an integer $n_j\geq 1$ such that 
\begin{itemize}
 \item $F^j(S(\ve_j))\notin I(F)$ for $0\leq j < n_j-1$, and
 \item $F^{n_j-1}(S(\ve_j))= T(\ve_{\sigma j})\in I(F)$.  
\end{itemize}
Then $F$ lifts to a pseudoautomorphism $F_X:X\dasharrow X$ on the complex manifold obtained by blowing up the points $S(\ve_j),\dots,F^{n_j-1}(S(\ve_j))$ for all $0\leq j\leq k$.  Moreover, $F_X$ is biholomorphic on a neighborhood of the proper transform $C_X$ of $C$.
\end{prop}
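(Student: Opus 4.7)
The plan is to verify two things: first, that $F_X$ is a pseudoautomorphism, by showing no hypersurface on $X$ is contracted by $F_X$ or $F_X^{-1}$; second, that $F_X$ is regular on a neighborhood of the strict transform $C_X$. Write $p_j^{(n)} := F^n(S(\ve_j))$ for $0\le n\le n_j-1$, so that $p_j^{(0)} = S(\ve_j)$ is the image to which $F$ collapses the exceptional hyperplane $\Sigma_j := T(\{x_j = 0\})$, each intermediate $p_j^{(n)}$ with $n < n_j - 1$ is a regular point of $F$, and $p_j^{(n_j-1)} = T(\ve_{\sigma j}) \in I(F)$. Let $\pi : X \to \cp^k$ be the blowup of the union of all these orbit points, and let $E_j^{(n)}$ denote the exceptional divisor over $p_j^{(n)}$.

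For the pseudoautomorphism claim I will establish, for each $j$, the orbit chain
\begin{equation*}
\Sigma_j \;\xrightarrow{F_X}\; E_j^{(0)} \;\xrightarrow{F_X}\; E_j^{(1)} \;\xrightarrow{F_X}\; \cdots \;\xrightarrow{F_X}\; E_j^{(n_j-1)} \;\xrightarrow{F_X}\; \Sigma'_{\sigma j},
\end{equation*}
where $\Sigma'_m := S(\{x_m = 0\})$ is the exceptional hypersurface of $F^{-1}$ over $T(\ve_m)$. The initial arrow expresses the standard fact that blowing up the target of a smooth hypersurface collapse converts it into a morphism onto the exceptional divisor. Each intermediate arrow is automatic from the hypothesis $p_j^{(n)} \notin I(F)$, which makes $F$ a local biholomorphism at $p_j^{(n)}$ and so lifts to a local biholomorphism between the blowups of $p_j^{(n)}$ and $p_j^{(n+1)}$. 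The final arrow is the standard local resolution of the Cremona involution: writing $F = S \circ J \circ T^{-1}$, blowing up the indeterminate point $\ve_{\sigma j}$ in the domain of $J$ turns its indeterminacy into a morphism onto the hyperplane $\{x_{\sigma j} = 0\}$, and postcomposing with $S$ sends $E_j^{(n_j-1)}$ onto $\Sigma'_{\sigma j}$. Thus every exceptional hypersurface of $F$ maps to a genuine hypersurface on $X$, and the symmetric argument applied to $F^{-1}$ (reading each orbit backwards) gives the same for $F_X^{-1}$.

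For regularity near $C_X$, the possible sources of indeterminacy for $F_X$ and $F_X^{-1}$ are the strict transforms of $I(F) \cup I(F^{-1})$ together with the exceptional divisors $E_j^{(n)}$. By Proposition \ref{exceptionalpts} (and its analogue for $F^{-1}$), $C$ meets $I(F) \cup I(F^{-1})$ only at the points $\{T(\ve_m)\} \cup \{S(\ve_m)\}$, all of which have been blown up. At an intermediate orbit point $p_j^{(n)}$ with $n < n_j - 1$, the local biholomorphism property of $F$ immediately gives regularity of $F_X$ on a neighborhood of $E_j^{(n)}$. The main obstacle is the terminal points $T(\ve_{\sigma j})$: there $I(F)$ contains a union of codimension-two linear subvarieties $T(\{x_i = x_l = 0\})$ through $T(\ve_{\sigma j})$ that a single point blowup does not entirely resolve. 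The strict transform $C_X$ meets $E_j^{(n_j-1)}$ at the single point encoding the tangent direction $T_{T(\ve_{\sigma j})} C$, and I claim this direction avoids every codimension-two component of $I(F)$ through $T(\ve_{\sigma j})$: the components $T(\{x_{\sigma j} = x_l = 0\})$ do not pass through $T(\ve_{\sigma j})$ at all (since $\ve_{\sigma j} \notin \{x_{\sigma j} = 0\}$), while Proposition \ref{exceptionalpts} ensures that $T_{T(\ve_{\sigma j})} C$ lies in at most one hyperplane $T(\{x_i = 0\})$ with $i \neq \sigma j$, and hence in no pairwise intersection $T(\{x_i = x_l = 0\})$ with $i, l \neq \sigma j$. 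So the strict transforms of these subvarieties miss $C_X \cap E_j^{(n_j-1)}$, $F_X$ is regular there, and the parallel argument at the $S(\ve_m)$ gives regularity of $F_X^{-1}$ near $C_X$.
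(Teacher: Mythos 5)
Your proof is correct and follows essentially the same route as the paper's: blow up the orbit points, track each exceptional hyperplane through the chain $\Sigma_j\to E_j^{(0)}\to\dots\to E_j^{(n_j-1)}\to\Sigma'_{\sigma j}$, and use Proposition \ref{exceptionalpts} to see that the residual codimension-two indeterminacy (the proper transforms of the $T(\{x_i=x_l=0\})$) misses $C_X$. The only thing left tacit --- as it also is in the paper --- is that the blown-up orbit points are pairwise distinct, which is what guarantees that an intermediate point $p_j^{(n)}$ lies on no exceptional hyperplane (not merely outside $I(F)$) and hence that $F$ really is a local biholomorphism there.
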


\begin{proof}
We can write $F = \pi_+\circ \tilde F \circ \pi_-^{-1}$, where $\pi_-:\Gamma_-\to \cp^k$ is the blowup of all points $T(\ve_j)\in I(F^{-1})\cap C$, $\pi_+:\Gamma\to\cp^k$ is the blowup of all points $T(\ve_j)\in I(F)\cap C$, and $\tilde F:\Gamma_-\to\Gamma_+$ is a pseudoautomorphism.  More precisely, $I(\tilde F)$ consists of proper transforms $\pi_-^{-1}S(\{x_i=x_j=0\})$, $i\neq j$, of intersections between distinct exceptional hyperplanes; $I(\tilde F^{-1})$ similarly consists of lifts $\pi_+^{-1}T(\{x_i = x_j=0\}$ of intersections between $F^{-1}$ exceptional hyperplanes; and $\tilde F$ restricts to a biholomorphism
$$
\Gamma_- - I(\tilde F) \to \Gamma_+ - I(\tilde F^{-1}).
$$
Proposition \ref{exceptionalpts} therefore tells us that $I(\tilde F)\cap \pi_-^{-1}(C) = \emptyset = I(\tilde F^{-1})$ so that $\tilde F$ maps a neighborhood of $\pi_-^{-1}(C)$ biholomorphically onto a neighborhood of $\pi_+^{-1}(C)$.

Now if $\sigma:X_1\to X_0 := \cp^k$ is the blowup of the image $S(\ve_j)$ of some exceptional hyperplane $T(\{x_j=0\})$, and $F_1:X_1\dasharrow X_1$ is the lift of $F_0:= F$ to $X_1$, then the proper transform of $T(\{x_j=0\})$ is no longer exceptional for $F_1$.  On the other hand the exceptional hypersurface $E = \sigma^{-1}(S(\ve_j))$ is either contracted by $F_1$ or, if $S(\ve_j) \in I(F)$, mapped by $F_1$ onto the proper transform of an exceptional hyperplane $S(\{x_{\sigma j} = 0\})$.  In any case, the map $F_1$ again admits a decomposition $F_1 = \pi_+\circ \tilde F_1\circ \pi_-^{-1}$ where $\pi_\pm$ are compositions of point blowups centered at distinct points in $\sigma^{-1}(C)$, and $\tilde F_1$ is a pseudoautomorphism mapping a neighborhood of $(\sigma\circ\pi_-)^{-1}(C)$ onto a neighborhood of $(\sigma\circ \pi_+)^{-1}(C)$.  

We may therefore proceed inductively blowing up the points $S(\ve_j), F(S(\ve_j)), \dots, F^{n_j-1}(S(\ve_j))$ until the map $F$ lifts to a new birational map with one less exceptional hypersurface than $F$.  Moving on to another point $S(\ve_j)$ and repeating, the the hypothesis of this proposition allow us to finally lift $F$ to a birational map $F_X:X\dasharrow X$ with no exceptional hypersurfaces.  Clearly in the end, $F_X(C_X)= C_X$ and $F_X$ is biholomorphic near $C_X$.
\end{proof}

We have a natural restriction map $tr:\div(X)\to \div(C_X) \cong \div(C)$, obtained by intersecting divisors on $X$ with $C_X$ (and then pushing forward by $\pi|_{C_X}:C_X\to C$, which is an isomorphism).  The map preserves linear equivalence and so descends to a quotient map $tr:\pic(X)\to\pic(C_X)\cong\pic(C)$.  
Moreover, the final conclusion of Proposition \ref{pseudoaut} guarantees that pullback commutes with restriction:  i.e. $(F|_C)^*\circ tr = tr \circ F_X^*$.  

\begin{cor}  
\label{multiplier}
Let $F$, $X$ be as in Proposition \ref{pseudoaut}.  Then the pullback operator $(F|_C)^*$ acts as follows.
\begin{itemize}
\item $tr(K_X) \mapsto tr(K_X)$;
\item $\mu \mapsto \delta\mu$ for all $\mu\in\pic_0(C)$, where $\delta\in\C^*$ is a root of the characteristic polynomial of $F_X^*$.
\end{itemize}
In particular, when $K_X\cdot C_X \neq 0$, this information completely characterizes $(F|_C)$.
\end{cor}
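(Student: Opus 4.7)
The equivariance $(F|_C)^*\circ tr = tr\circ F_X^*$ is already noted just before the corollary, and both bullets follow by feeding the right classes into it. Since a pseudoautomorphism has trivial critical divisor, $F_X^*K_X = K_X$; applying $tr$ and using equivariance immediately gives the first bullet $(F|_C)^*tr(K_X) = tr(K_X)$. For the second, Corollary \ref{centered} and Proposition \ref{encurve} force $F|_C$ to fix the cusp $\gamma(\infty)$, so via $\gamma$ it restricts to an affine map $t\mapsto at+b$ on $C_{reg}\cong\C$; a direct computation using Proposition \ref{grouplaw} then shows $(F|_C)^*$ acts on $\pic_0(C)\cong\C$ by multiplication by the scalar $\delta := 1/a$.

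The substantive step is identifying $\delta$ with a root of the characteristic polynomial of $F_X^*$. I would observe that $\ker(tr)$ is $F_X^*$-invariant by equivariance, so $F_X^*$ descends to the quotient $\pic(X)/\ker(tr)\cong tr(\pic(X))$ with induced action equal to $(F|_C)^*|_{tr(\pic(X))}$. Hence the characteristic polynomial of this restricted operator divides that of $F_X^*$. Now split $tr(\pic(X))$ via the degree exact sequence
\[
0 \to tr(\pic(X))\cap\pic_0(C) \to tr(\pic(X)) \to \Z \to 0.
\]
Because $(F|_C)^*$ preserves degree and acts as the scalar $\delta$ on all of $\pic_0(C)$, this sequence is $(F|_C)^*$-equivariant with scalar $\delta$ on the subgroup and identity on the quotient. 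It therefore suffices to exhibit one nonzero element of $tr(\pic(X))\cap\pic_0(C)$, and the degree zero class $tr(E_i) - tr(E_j) = [p_i] - [p_j]$, for any two distinct blown-up points $p_i, p_j\in C$, does the job.

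For the final characterization, an automorphism of $C$ is determined by the affine data $(a,b)$: the scalar $\delta$ recovers $a = 1/\delta$, and writing $tr(K_X) = d[\gamma(0)] + \mu$ with $d = K_X\cdot C_X$ and $\mu\in\pic_0(C)\cong\C$, a direct expansion of $(F|_C)^*tr(K_X) = tr(K_X)$ in this decomposition reduces to the scalar equation $db = (1-a)\mu$, which uniquely determines $b$ when $d\neq 0$. The main obstacle is the middle step: choosing the right subquotient of $\pic(X)$ on which $\delta$ is visible and recognizing that $(F|_C)^*$ is triangular rather than diagonal in the splitting $\pic(C) = \Z[\gamma(0)] \oplus \pic_0(C)$; once these are in place, the remaining computations are routine linear algebra.
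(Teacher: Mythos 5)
Your argument is correct and follows the paper's proof in all essentials: both bullets come from the equivariance $(F|_C)^*\circ tr = tr\circ F_X^*$, and the identification of $\delta$ as a root of the characteristic polynomial rests, exactly as in the paper, on producing a nonzero class in $tr(\pic(X))\cap\pic_0(C)$ and invoking integrality of the characteristic polynomial --- your quotient-and-divisibility packaging is equivalent to the paper's direct Cayley--Hamilton computation $P(\delta)\,tr(D)=tr(P(F_X^*)D)=0$. The only cosmetic differences are your witness class $tr(E_i)-tr(E_j)=[p_i]-[p_j]$ in place of the paper's $tr((k+1)E-H)$, and your (arguably more careful) observation that the pullback acts on $\pic_0(C)$ by the reciprocal of the forward multiplier, which is harmless here since the characteristic polynomial of $F_X^*$ is reciprocal.
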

\begin{proof}
The first assertion follows from $F_X^* K_X = K_X$, which holds because $F_X$ is a pseudoautomorphism.

For the second assertion, note that $(F|_C)^*$ restricts to the group automorphism on $\pic_0(C) \cong \C$ given by multiplication by the constant $\delta\in\C^*$ from Theorem \ref{good br maps}.  To see that $P(\delta) = 0$, where $P$ is the characteristic polynomial of $F_X^*$, let $C^\perp\subset\pic(X)$ denote the subspace represented by divisors $D\in\div(X)$ such that $D\cdot C_X = 0$; i.e. $tr(D)$ represents an element of $\pic_0(C)$.  Then $tr(F_X^*D) = \delta \, tr(D)$.  Hence, since $P$ has integer coefficients, we may infer that $P(\delta)\,tr(D) = tr(P(F_X^*)D) = 0$.  We infer that $P(\delta) = 0$ as long as $tr(C^\perp)$ is non-trivial.  But $tr(C^\perp)$ includes all classes of the form $(k+1)[p] - (k+1)[\gamma(0)] = tr((k+1)E - H)$, where $p\in I(F)$ is a point of indeterminacy, $E = \pi^{-1}(p)$ is the corresponding exceptional hypersurface, and $H$ is a hyperplane section.  Since $I(F)$ consists of $k+1$ distinct points, $k>0$ of these classes are non-trivial in $\pic_0(C)$.  

The final assertion follows from the facts that $\pic(C)$ is generated by $\pic_0(C)$ together with one (any) other class of degree one, and that $\deg tr(K_X) = K_X\cdot C_X$.  
\end{proof}

\begin{rem}  We have $K_X\cdot C_X\neq 0$ in all the cases we consider.  If $N$ is the number of blowups comprising $\pi$, then one computes that $K_X \cdot C_X = N(k-1)-(k+1)^2$.  Hence $K_X\cdot C_X$ vanishes only when $k=2, N=9$ or $k=3, N=8$ or $k=5, N=9$.  The last of these does not occur since $N\geq k+1 = \# I(F)$.
\end{rem}

\begin{cor}
\label{no translations}
Let $F$, $X$ be as in Proposition \ref{pseudoaut}.  If $K_X\cdot C_X\neq 0$ and $F|_C$ is a translation, then $F$ is linearly conjugate to the standard cremona map $J$.
\end{cor}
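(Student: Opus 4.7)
The plan is to show that the translation parameter $\tau$ must vanish, which will collapse $F$ to a diagonal modification of $J$ and allow us to conjugate to $J$ itself.

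First, I specialize Theorem \ref{good br maps} to $\delta = 1$, so that $F|_C$ corresponds to $t\mapsto t+\tau$. The theorem gives $\tau = \frac{k-1}{k+1}\sum_j t_j^+$ and $S(\ve_j) = \gamma\!\left(t_j^+ - \frac{2\tau}{k-1}\right)$, where as usual $T(\ve_j) = \gamma(t_j^+)$. Translating the orbit condition $F^{n_j-1}(S(\ve_j)) = T(\ve_{\sigma j})$ from Proposition \ref{pseudoaut} via the parametrization $\gamma$ yields, for each $j$,
\[
\left(n_j - 1 - \frac{2}{k-1}\right)\tau \;=\; t_{\sigma j}^+ - t_j^+.
\]

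Next, I sum these $k+1$ equations over $j\in\{0,1,\dots,k\}$. Since $\sigma$ is a permutation, the right-hand side telescopes to zero. Letting $N=\sum_j n_j$ be the total number of blowups used to construct $X$ and clearing the denominator $k-1$, the sum becomes $\bigl[(k-1)N - (k+1)^2\bigr]\tau = 0$. The bracketed coefficient is precisely the intersection number $K_X\cdot C_X$ computed in the Remark following Corollary \ref{multiplier}. Since $K_X\cdot C_X\neq 0$ by hypothesis, we conclude $\tau = 0$.

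Finally, with $\tau = 0$ the formula for $S(\ve_j)$ collapses to $S(\ve_j) = \gamma(t_j^+) = T(\ve_j)$ for every $j$. Thus $T^{-1}\circ S$ fixes each coordinate point $\ve_j$ and must be diagonal, say $S = T\circ D$. Choosing any branch of the square root on each diagonal entry, the standard relation $J\circ D^{-1/2} = D^{1/2}\circ J$ yields $D\circ J = D^{1/2}\circ J\circ D^{-1/2}$, and therefore
\[
F \;=\; S\circ J\circ T^{-1} \;=\; T\circ D\circ J\circ T^{-1} \;=\; (TD^{1/2})\circ J\circ (TD^{1/2})^{-1},
\]
exhibiting $F$ as linearly conjugate to $J$.

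The only nonroutine step is recognizing that the telescoped coefficient $(k-1)N - (k+1)^2$ coincides with the intersection number $K_X\cdot C_X$; this is exactly why the hypothesis $K_X\cdot C_X\neq 0$ is the right one. Once this identification is in hand, the rest is bookkeeping with the parameter formulas of Theorem \ref{good br maps} and the well-known behavior of $J$ under diagonal conjugation.
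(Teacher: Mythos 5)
Your argument is correct, and it takes a genuinely different route from the paper's. The paper works entirely in the Picard group: by Corollary \ref{multiplier}, $(F|_C)^*$ acts as multiplication by $\delta=1$ on $\pic_0(C)$ and fixes $tr(K_X)$, whose degree is $K_X\cdot C_X\neq 0$; hence $(F|_C)^*=\id$ on all of $\pic(C)$, so $F|_C=\id$, which forces every $n_j=1$ and makes $F^2$ a linear automorphism fixing $C$ pointwise, whence $F^2=\id$ by Proposition \ref{linearstab}. You instead work with the explicit parameters of Theorem \ref{good br maps}: the orbit conditions become $(n_j-1-\tfrac{2}{k-1})\tau = t^+_{\sigma j}-t^+_j$, and summing over $j$ telescopes the right side to zero, leaving $\bigl[(k-1)N-(k+1)^2\bigr]\tau=0$. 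It is a nice observation that this coefficient is exactly $K_X\cdot C_X$; in the paper the same number enters as $\deg tr(K_X)$, so the two proofs ultimately hinge on the same arithmetic. What your version buys is a completely explicit endgame: from $\tau=0$ you get $S(\ve_j)=T(\ve_j)$, hence $T^{-1}S$ diagonal, and the conjugating map $TD^{1/2}$ is exhibited directly, whereas the paper's final step (from $F^2=\id$ to ``linearly conjugate to $J$'') is left to the reader. One further remark, which affects both proofs equally: once $\tau=0$ and $\delta=1$, the formula $\tau=\tfrac{k-1}{k+1}\sum t_j^+$ forces $\sum t_j^+=0$, which by Proposition \ref{grouplaw} would make the points $T(\ve_j)$ lie on a hyperplane, contradicting $T\in\aut(\cp^k)$. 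So the hypotheses of the corollary are in fact never realized and the statement is vacuously true; this does not invalidate your derivation, but it is worth knowing that the corollary's real content is that the translation case simply cannot occur when $K_X\cdot C_X\neq 0$.
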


\begin{proof}
The condition that $F|_C$ is a translation is equivalent to the condition that $(F|_C)^*$ is the identity operator on $\pic_0(C)$, i.e. $\delta=1$ in Corollary \ref{multiplier}. 
Since $tr(K_X)$ is also fixed by $(F|_C)^*$ it follows that $(F|_C)^*$ is the identity operator on all of $\pic(C)$.  Therefore $F|_C = \id$.  

We infer that $n=n(p) = 1$ for all $p\in I(F^{-1})$ and therefore $F^2$ is an automorphism of $\cp^k$, mapping each exceptional hyperplane for $F$ to an exceptional hyperplane for $F^{-1}$.  Since $F^2(C) = C$, Proposition \ref{linearstab} further implies that $F^2 = \id$.  That is, $F$ is linearly conjugate to $J$.
\end{proof}

\section{...to formulas}\label{S:Pk_coxeter}

In this section, we derive a formula for a basic cremona map $F:\cp^k\dasharrow\cp^k$ that fixes an elliptic normal curve $C$ and lifts via point blowups along $C$ to a pseudoautomorphism $F_X:X\dasharrow X$ as in Proposition \ref{pseudoaut}.  Specifically, in order to arrive at a formula, we begin with $n>0$ and suppose that $F= S \circ J \circ T^{-1}$ properly fixes $C$ and satisfies the hypotheses of Proposition \ref{pseudoaut} as follows.
\begin{equation}\label{E:orbitdata}
\begin{aligned}
&S(\ve_j) = T(\ve_{j+1})\quad \text{ for }  j=0,1, \dots, k-1, \\
&F^{n-1}(S(\ve_k)) =T(\ve_0), \\
 &F^j(S(\ve_k))  \not\in I(F) \ \ \text{ for } 0 \le j < n-1.\\
\end{aligned}
\end{equation}
That is, $F$ has `orbit data' $(n_0,\dots, n_{k-1},n_k) = (1,\dots, 1, n)$ with cyclic permutation $\sigma:j\mapsto j+1\mod k$. 

We assume that $C= \{ \gamma(t) = [1,t,\dots, t^{k-1}, t^{k+1}], t \in \C\cup \{\infty\} \}$ is in standard form and that $F|_C$ is not a translation.  Note that by Corollary \ref{no translations}, the case of a translation can not lead to $\delta>1$.  Thus we may conjugate $F$ by a linear map $T_\lambda$ to arrange that $\gamma(1)$ is the unique fixed point of $F|_C$ different from the cusp $\gamma(\infty)$, i.e. 
\[
F|_C \ :\ \gamma(t) \mapsto \gamma(\delta (t-1) + 1), \quad \text{and} \quad F|_C^{-1}\ :\ \gamma(t) \mapsto \gamma \left(\frac{1}{\delta} (t-1) +1\right).
\]
Let us also suppose that $\delta \in \C$ is not a root of unity.  As in Theorem \ref{good br maps}, we let $t_j^+$ denote the parameters for points of indeterminacy of $F$: 
\[\gamma(t_j^+) = T(e_j), \ j=0,1,\dots, k.\]

\begin{lem}\label{lem:tj}
The multiplier $\delta$ is a root of the polynomial
\begin{equation*}
 \label{E:salempoly}
(\delta^{n+k}-1)(\delta^2-1) - \delta(\delta^{k+1} - 1)(\delta^{n-1}-1),
\end{equation*}
and the parameters $t_j^+$ are given by
$$
t_j^+ =  \delta^j\frac{k+1}{k-1}\cdot \frac{\delta^2 - 1}{\delta(\delta^{k+1}-1)} - \frac{2}{k-1}. 
$$
\end{lem}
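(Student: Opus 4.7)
The plan is to set up and solve a system of equations involving the parameters $t_j^+$, using Theorem \ref{good br maps} together with the orbit data \eqref{E:orbitdata}. The normalization of $F|_C$ so that $\gamma(1)$ is the nontrivial fixed point forces $\tau = 1-\delta$ in the formula $F(\gamma(t)) = \gamma(\delta t + \tau)$, and Theorem \ref{good br maps} then gives
\[
\sum_{j=0}^k t_j^+ \;=\; \frac{(k+1)(1-\delta)}{(k-1)\delta}, \qquad S(\ve_j) \;=\; \gamma\!\left(\delta t_j^+ + \frac{2(\delta-1)}{k-1}\right).
\]

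The first $k$ orbit conditions $S(\ve_j) = T(\ve_{j+1})$, for $j=0,\dots,k-1$, become the affine recurrence
\[
t_{j+1}^+ \;=\; \delta\, t_j^+ + a, \qquad a := \frac{2(\delta-1)}{k-1},
\]
which I would solve explicitly as
\[
t_j^+ \;=\; \delta^j t_0^+ + \frac{2(\delta^j-1)}{k-1}.
\]
Substituting this formula into the sum constraint above and simplifying collapses the right-hand side to $\tfrac{k+1}{k-1}\cdot\tfrac{\delta+1}{\delta}$, from which one reads off
\[
t_0^+ \;=\; \frac{k+1}{k-1}\cdot\frac{\delta^2-1}{\delta(\delta^{k+1}-1)} \,-\, \frac{2}{k-1},
\]
and then $t_j^+ = \delta^j t_0^+ + \tfrac{2(\delta^j-1)}{k-1}$ gives the claimed formula for every $j$.

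For the polynomial condition on $\delta$, I would use the closing orbit condition $F^{n-1}(S(\ve_k)) = T(\ve_0)$. Since $F|_C$ conjugates via $u = t-1$ to the multiplication $u\mapsto \delta u$, iteration is trivial: writing $S(\ve_k) = \gamma(s)$ with $s = \delta t_k^+ + a = t_{k+1}^+$, the condition becomes
\[
\delta^{n-1}(t_{k+1}^+ - 1) + 1 \;=\; t_0^+,
\]
i.e., using $t_{k+1}^+ = \delta^{k+1} t_0^+ + \tfrac{2(\delta^{k+1}-1)}{k-1}$ and clearing $\delta^{n-1}$,
\[
t_0^+(\delta^{n+k}-1) \;=\; \delta^{n-1}-1 \,-\, \frac{2\delta^{n-1}(\delta^{k+1}-1)}{k-1}.
\]
Plugging in the value of $t_0^+$ found above and multiplying through by $(k-1)\delta(\delta^{k+1}-1)$, the right-hand side simplifies via the telescoping identity $2(\delta^{n+k}-1) - 2(\delta^{n+k}-\delta^{n-1}) + (k-1)(\delta^{n-1}-1) = (k+1)(\delta^{n-1}-1)$, and the factor $k+1$ cancels against the corresponding factor coming from $t_0^+$. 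What remains is exactly
\[
(\delta^2-1)(\delta^{n+k}-1) \;=\; \delta(\delta^{k+1}-1)(\delta^{n-1}-1),
\]
which is the stated polynomial relation.

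The only real obstacle is the bookkeeping in the final substitution; everything is otherwise a linear recurrence plus one closing equation. I would present the cancellation leading to $(k+1)(\delta^{n-1}-1)$ as the key algebraic step, since it is the one place where the structure of the orbit data (the single long orbit of length $n$) manifests itself cleanly in the answer.
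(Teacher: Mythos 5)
Your proposal is correct and follows essentially the same route as the paper: both translate the orbit conditions \eqref{E:orbitdata} via Theorem \ref{good br maps} (with $\tau=1-\delta$) into the affine recurrence $t_{j+1}^+=\delta t_j^+ + \tfrac{2(\delta-1)}{k-1}$, and then combine the sum constraint $\sum t_j^+=\tfrac{(k+1)(1-\delta)}{(k-1)\delta}$ with the closing condition $F^{n-1}(S(\ve_k))=T(\ve_0)$ to pin down $t_0^+$ and the polynomial for $\delta$. The only difference is the order of the last two steps — you extract $t_0^+$ from the sum constraint first and then get the polynomial from the closing condition, whereas the paper does the reverse and must afterwards rewrite $t_0^+$ using the polynomial relation — and your ordering is, if anything, slightly cleaner; the computations all check out.
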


It is convenient that the orbit length $n$ enters into the formula for $t_j^+$ only through the polynomial defining $\delta$.

\begin{proof}
Using the formula for $S(\ve_j)$ from Theorem \ref{good br maps} and $\tau = 1-\delta$, one rewrites the first condition in \eqref{E:orbitdata} as
$$
t_j^+ + \frac{2}{k-1} = \delta \left(t_{j-1}^+ + \frac{2}{k-1}\right) = \dots = \delta^j\left(t_0^+ + \frac{2}{k-1}\right)
$$
for all $1\leq j\leq k$.  Similarly, one rewrites the third condition in \eqref{E:orbitdata} as
$$
\delta^n\left(t_k^+ + \frac{2}{k-1}\right) = t_0^+ + \frac{2}{k-1} + (\delta^{n-1}-1)\frac{k+1}{k-1}.
$$
Comparing this second equation with the case $j=k$ in the first gives
$$
t_0^+ + \frac{2}{k-1} = \frac{\delta^{n-1}-1}{\delta^{n+k}-1}\cdot \frac{k+1}{k-1}.
$$
Hence
$$
\sum_{j=0}^k \left(t_j^+ + \frac{2}{k-1}\right) = \left(\sum_{j=0}^k \delta^j\right)\left(t_0^+ + \frac{2}{k-1}\right)
= \frac{\delta^{k+1}-1}{\delta - 1}\cdot \frac{\delta^{n-1}-1}{\delta^{n+k}-1}\cdot \frac{k+1}{k-1}
$$
On the other hand, the formula for $\tau$ from Theorem \ref{good br maps} gives an alternative expression
$$
\sum_{j=0}^k \left(t_j^+ + \frac{2}{k-1}\right) = \frac{k+1}{k-1} \left(\frac{1}{\delta} +1\right).
$$
Comparing the last two equations, we arrive at
$$
\frac{\delta^{n-1}-1}{\delta^{n+k}-1} = \frac{\delta^2 - 1}{\delta(\delta^{k+1}-1)}.
$$
This gives us the defining polynomial for $\delta$ and allows us to revise our formula for $t_0^+$ to the desired equation
$$
t_0^+ + \frac{2}{k-1} = \delta^j\frac{k+1}{k-1}\cdot \frac{\delta^2 - 1}{\delta(\delta^{k+1}-1)}.
$$
The formulas for $t_j^+$, $j>0$ follow immediately.
\end{proof}

Let us consider a set of indexes  \[\Gamma = \{(2,4),(2,5),(2,6),(2,7), (3,4),  (3,5),(4,4),(5,4)\} \cup \{ (k,n): n \le 3, k\ge 2\}\]

\begin{cor}\label{C:salem}
If $(k,n) \in \Gamma$ then the polynomial for $\delta$ given in (\ref{E:salempoly}) is a product of cyclotomic factors. If $(k,n) \not\in \Gamma$, the polynomial (\ref{E:salempoly}) is the product of cyclotomic factors and a Salem polynomial.
\end{cor}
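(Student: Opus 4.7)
The plan is to analyze the polynomial
$$
p(\delta) \;=\; (\delta^{n+k}-1)(\delta^2-1) \;-\; \delta(\delta^{k+1}-1)(\delta^{n-1}-1)
$$
from two complementary perspectives: as a palindromic integer polynomial studied in isolation, and as (essentially) the characteristic polynomial of the Coxeter element $F_X^*$ arising from Proposition \ref{pseudoaut}. A direct computation shows that $p$ has degree $n+k+2$ and that $\delta^{n+k+2}\,p(1/\delta)=p(\delta)$, so $p$ is palindromic. Hence its roots come in pairs $\{\alpha,1/\alpha\}$, and $p$ factors over $\mathbf{Z}$ into reciprocal irreducible pieces, each of which is either cyclotomic, a Salem polynomial, or a reciprocal polynomial with more than one pair of roots off the unit circle.

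For the finite exceptional list $\Gamma$, I would simply exhibit the factorization of $p$ into cyclotomic polynomials $\Phi_d$ case by case. For example, at $(k,n)=(2,2)$ one computes $p(\delta)=(\delta-1)^2\Phi_5(\delta)$, and similar explicit factorizations dispatch each remaining pair in $\Gamma$. The upshot is that in each of these cases $\delta$ is a root of unity, consistent with the pseudoautomorphism $F_X$ having finite order.

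For $(k,n)\notin\Gamma$, I would invoke the Coxeter-theoretic setup outlined in the introduction and \S\ref{S:coxeter}: the orbit data $(1,\dots,1,n)$ with cyclic permutation $\sigma$ determine an infinite reflection group $W$ acting on $\pic(X)$, and $F_X^*$ is (up to cyclotomic cofactors) the Coxeter element of $W$. Appealing to McMullen's general theorem (see \cite{mcmullen2007dynamics}) on characteristic polynomials of Coxeter elements in hyperbolic reflection groups, such a polynomial factors as a product of cyclotomic polynomials and at most one Salem polynomial. Combined with palindromy and the identification of $p$ with the characteristic polynomial of $F_X^*$, this yields the desired cyclotomic-times-Salem structure; the Salem factor is actually present because $(k,n)\notin\Gamma$ forces $W$ to be non-spherical and its Coxeter element to have infinite order.

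The main obstacle I anticipate is pinning down the precise boundary of $\Gamma$ rather than the general factorization structure: one must show that for every pair $(k,n)\notin\Gamma$ the Coxeter element truly has infinite order (so a root $\delta>1$ exists), and conversely no such root arises for $(k,n)\in\Gamma$. I would handle this by analyzing $p$ on the real interval $(1,\infty)$, comparing growth rates of the two summands to produce a root $\delta_0>1$ for all such $(k,n)$, and rely on the explicit cyclotomic factorizations above to rule out any such root for $(k,n)\in\Gamma$.
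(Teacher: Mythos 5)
Your proposal is correct and follows essentially the same route as the paper: identify the polynomial with the characteristic polynomial of the Coxeter element of the $T$-shaped reflection group $W(2,k+1,n-1)$, invoke McMullen's theorem that such polynomials factor as a product of cyclotomics and at most one Salem polynomial, verify the cyclotomic factorizations for the pairs in $\Gamma$ directly, and confirm a real root $\delta>1$ for the remaining pairs. One caution: your intermediate claim that $(k,n)\notin\Gamma$ ``forces $W$ to be non-spherical and its Coxeter element to have infinite order, hence a Salem factor'' is not sufficient as stated, since the affine cases (e.g.\ $(2,7)$, $(3,5)$, $(5,4)$, which lie \emph{in} $\Gamma$) are non-spherical with infinite-order Coxeter elements yet purely cyclotomic spectrum; the boundary of $\Gamma$ really must be settled by the real-root analysis you defer to at the end, which is exactly what the paper does by noting that the largest real root is monotone in $k$ and $n$ and checking that it exceeds $1$ at the minimal pairs outside $\Gamma$.
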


\begin{proof}
After factoring out $(\delta-1)$ the equation of $\delta$ given in (\ref{E:salempoly})  can be written as
\[ 
\begin{aligned}
\chi(k,n)\ :=\ &\delta^n(\delta^{k+2}- \delta^{k+1}-\delta^k +1) + \delta^{k+2}-\delta^2-\delta+1\\
=\ &\delta^{k+2}(\delta^{n}- \delta^{n-1}-\delta^{n-2} +1) + \delta^{n}-\delta^2-\delta+1\\
\end{aligned}
\] 
Thus we see that if the largest real root is bigger than one then it is strictly increasing to a Pisot number as  $n \to \infty$ and the same is true as $k\to \infty$. The above equation is known as a characteristic polynomial for the coxeter element of a reflection group $W(2, k+1, n-1)$ with $T$ shaped Dynkin diagram. (See section 5 for the connection between $F$ and coxeter elements.) The characteristic polynomial for the coxeter element of such a reflection group is the product of cyclotomic factors and Salem polynomials. (See \cite{McMullen:2002} Proposition 7.1) For all $k\ge 2$, if $n=1$ then $F$ is equivalent to a standard cremona involution $J$. For $n=2, 3 $ we have
\[ \chi(k,2) = (\delta-1) (\delta^{k+2}-1), \qquad \chi(k,3)= (\delta-1)^2 (\delta+1) (\delta^{k+1}+1) \]
If $(k,n)=(2,8), (3,6),(4,5),(5,5)(6,5)$, then $\chi(k,n)$ has a root bigger than one. We get this corollary by checking directly for the pairs in $\Gamma$.
\end{proof}

Thus applying Corollary \ref{multiplier}, we have
\begin{cor}
A basic cremona map $F$ on $\cp^k$ satisfying \eqref{E:orbitdata} has dynamical degree $>1$ if and only if $(n,k) \not \in \Gamma$ and the multiplier for $F|_C$ is not a root of unity.
\end{cor}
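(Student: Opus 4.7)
The plan hinges on three ingredients: (i) the dynamical degree of the pseudoautomorphism $F_X$ equals the spectral radius of the integer operator $F_X^*$ acting on $\pic(X)$; (ii) by Corollary \ref{multiplier}, $\delta$ is an eigenvalue of $F_X^*$, so the minimal polynomial of $\delta$ over $\mathbf{Q}$ divides the characteristic polynomial of $F_X^*$; and (iii) by Lemma \ref{lem:tj} and Corollary \ref{C:salem}, $\delta$ is a root of $\chi(k,n)$, which factors as a product of cyclotomic polynomials times (in the case $(n,k)\notin\Gamma$) an irreducible Salem polynomial $S$.

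For the backward direction, I would assume $(n,k)\notin\Gamma$ and that $\delta$ is not a root of unity. Then $\delta$ must be a root of the Salem factor $S$, and since Salem polynomials are irreducible over $\mathbf{Q}$, $S$ is the minimal polynomial of $\delta$. By (ii), $S$ divides the characteristic polynomial of $F_X^*$, so every root of $S$ — in particular the Salem number $\lambda>1$ — appears as an eigenvalue. The spectral radius of $F_X^*$, and hence the dynamical degree, is therefore at least $\lambda$.

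For the forward direction I would argue contrapositively. First observe that if $(n,k)\in\Gamma$ then Corollary \ref{C:salem} says $\chi(k,n)$ is a product of cyclotomic polynomials, so any root $\delta$ of $\chi(k,n)$ is automatically a root of unity. Thus the compound hypothesis fails precisely when $\delta$ is a root of unity, and one must show the dynamical degree equals $1$ in that case. The sub-case $\delta=1$ is immediate from Corollary \ref{no translations}, which identifies $F$ with a linear conjugate of the standard Cremona involution $J$, hence of dynamical degree $1$. For a non-trivial root-of-unity multiplier, the plan is to use the Coxeter-element description of $F_X^*$ developed in Section \ref{S:coxeter} to pin down the entire characteristic polynomial of $F_X^*$ as a product of cyclotomic factors, forcing the spectral radius to equal $1$.

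The main obstacle is precisely this last step. Corollary \ref{multiplier} only exhibits $\delta$ as a single eigenvalue of $F_X^*$, which is plenty for the backward direction but does not by itself preclude a Salem-type eigenvalue of $F_X^*$ when $\delta$ is a non-trivial root of unity. Ruling out such hidden eigenvalues requires the full Coxeter-element framework of Section \ref{S:coxeter}; everything else in the argument is essentially a clean consequence of the arithmetic of Salem polynomials combined with Corollaries \ref{multiplier}, \ref{C:salem}, and \ref{no translations}.
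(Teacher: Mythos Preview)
Your argument is essentially the paper's: the paper's entire proof is the single phrase ``applying Corollary \ref{multiplier}'', which encodes exactly your backward direction---$\delta$ is a root of the characteristic polynomial of the integer operator $F_X^*$, hence so is every Galois conjugate of $\delta$, and when $\delta$ is not a root of unity this includes the Salem number from Corollary \ref{C:salem}.  You are more careful than the paper about the forward direction, and your diagnosis of the residual gap is accurate: Corollary \ref{multiplier} alone does not rule out a large eigenvalue of $F_X^*$ when $\delta$ is a root of unity.  The paper does not spell this out either; it is covered implicitly by the standing hypothesis at the start of \S\ref{S:Pk_coxeter} that $\delta$ is not a root of unity (which makes the case $(n,k)\in\Gamma$ vacuous, since then every root of $\chi(k,n)$ is a root of unity), and ultimately by the identification in \S\ref{S:coxeter} of $F_X^*$ with the Coxeter element, whose characteristic polynomial is $\chi(k,n)$ itself.
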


\begin{cor}
\label{noverlaps}
Let $\delta, t_j^+$ be as in Corollary \ref{lem:tj}.  Then all points $T(\ve_j)$, $1\leq j\leq k$ and $F^{-i}(T(\ve_0))$, $0\leq i \leq n-1$ in \eqref{E:orbitdata} are distinct. 
\end{cor}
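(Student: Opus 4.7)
The plan is to parametrize the $k+n$ points as $\gamma(t_j^+)$ for $0\le j\le k$ and $\gamma(s_i)$ for $0\le i\le n-1$, where $s_i := (t_0^+-1)/\delta^i+1$ comes from iterating the affine action $F^{-1}|_C : \gamma(t)\mapsto\gamma((t-1)/\delta+1)$ (and noting $s_0 = t_0^+$). In each of three possible kinds of coincidence I will translate the equality into a polynomial condition on $\delta$ and derive a contradiction from the standing assumption that $\delta$ is not a root of unity.

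For distinctness among the $t_j^+$'s, Lemma \ref{lem:tj} gives $t_j^+ + \tfrac{2}{k-1} = \delta^j\alpha$ with $\alpha = \tfrac{k+1}{k-1}\cdot\tfrac{\delta^2-1}{\delta(\delta^{k+1}-1)}\ne 0$, so $t_j^+=t_{j'}^+$ would force $\delta^{j-j'}=1$ with $1\le|j-j'|\le k$, impossible.  For distinctness among the $s_i$'s, the second formula for $\alpha$ gives $t_0^+-1 = -\tfrac{k+1}{k-1}\delta^{n-1}(\delta^{k+1}-1)/(\delta^{n+k}-1) \ne 0$, so $s_i=s_{i'}$ forces $\delta^{i-i'}=1$, again impossible.

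The main work is the cross-distinctness $t_j^+ = s_i$ with $1\le j\le k$ and $1\le i\le n-1$.  Setting the parameters equal and substituting the first formula for $\alpha$ yields
\[
(\delta^2-1)(\delta^{i+j}-1) = \delta(\delta^{k+1}-1)(\delta^i-1).
\]
Together with the defining relation $(\delta^{n+k}-1)(\delta^2-1) = \delta(\delta^{k+1}-1)(\delta^{n-1}-1)$ encoded by $\chi(k,n)(\delta)=0$, this is equivalent to
\[
\delta^d(\delta^{k+1}-1)+\delta^a(\delta^{n-1}-1)=\delta^{k+n}-1,
\]
where $a:=j\in\{1,\dots,k\}$ and $d:=n-1-i\in\{0,\dots,n-2\}$.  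The pair $(a,d)=(k+1,0)$ satisfies this identity tautologically; subtracting that instance produces the factored relation
\[
(\delta^{k+1}-1)(\delta^d-1) = \delta^a(\delta^{k+1-a}-1)(\delta^{n-1}-1).
\]
When $d=0$ the left side vanishes, forcing $\delta^{k+1-a}=1$ with $1\le k+1-a\le k$, a contradiction.  When $d\ge 1$ all four factors are nonzero under our hypothesis on $\delta$; a further application of $\chi(k,n)(\delta)=0$, used to replace the factor $\delta^{n-1}-1$ via $(\delta^{n+k}-1)(\delta^2-1)/[\delta(\delta^{k+1}-1)]$, should then collapse the identity to a short polynomial relation forcing $\delta^m=1$ for some $1\le m \le \max(k,n-1)$, again ruled out by hypothesis.

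The main obstacle is executing this last elimination cleanly, producing a single visible factor $\delta^m-1$ with $m$ in the forbidden range.  The structural feature that makes this feasible is that $\chi(k,n)$ itself appears as the extremal $(a,d)=(k+1,0)$ instance of the one-parameter family of identities under consideration, so any nonextremal instance carries strictly more algebraic information than $\chi(k,n)$ alone and cannot be reconciled with it unless $\delta$ is a root of unity.
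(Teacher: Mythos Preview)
Your reductions in the first two paragraphs are fine and match the paper's own first step: distinctness among the $t_j^+$'s and among the $s_i$'s follows immediately from $\delta$ not being a root of unity, and for the cross case you correctly arrive at the relation $(\delta^2-1)(\delta^{i+j}-1)=\delta(\delta^{k+1}-1)(\delta^i-1)$. Your subsequent manipulation to the factored form $(\delta^{k+1}-1)(\delta^d-1)=\delta^a(\delta^{k+1-a}-1)(\delta^{n-1}-1)$ is also correct, and the $d=0$ case goes through.

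The gap is the case $d\ge 1$. You write that substituting $\delta^{n-1}-1=(\delta^{n+k}-1)(\delta^2-1)/[\delta(\delta^{k+1}-1)]$ ``should then collapse the identity to a short polynomial relation forcing $\delta^m=1$.'' It does not. That substitution yields
\[
\delta(\delta^{k+1}-1)^2(\delta^d-1)=\delta^{a}(\delta^{k+1-a}-1)(\delta^{n+k}-1)(\delta^2-1),
\]
which is a second polynomial relation of degree different from $\chi(k,n)$; together they cut out finitely many $\delta$, but there is no reason these should all be roots of unity, and no further purely algebraic cancellation presents itself. The structural remark that $(a,d)=(k+1,0)$ is the ``extremal'' instance does not by itself supply a contradiction for the nonextremal cases.

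The paper closes this gap by a different idea you have not used: since $\delta$ is a root of $\chi(k,n)$ that is \emph{not} a root of unity, Corollary~\ref{C:salem} forces $\delta$ to be a root of the Salem factor, hence a Galois conjugate of the unique real root $\delta_r>1$. Consequently the coincidence polynomial $\delta(\delta^{k+1}-1)(\delta^i-1)-(\delta^{i+j}-1)(\delta^2-1)$ must also vanish at $\delta_r$. The paper then expands both terms over $(\delta-1)^2$ and compares coefficients to show that this expression has a definite sign for real $\delta>1$ in each of the cases $j<k$; $j=k,\,i\ge 2$; and $j=k,\,i=1$. This real--variable positivity argument is the missing ingredient; your purely algebraic elimination does not reach the conclusion without it.
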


\begin{proof}
It suffices to show that the parameters corresponding to these points are distinct.

From Lemma \ref{lem:tj} we have for $0\le i< j\le k$,
$$
t_j^+ - t_i^+ = (\delta^j-\delta^i) \cdot\frac{k+1}{k-1}\cdot \frac{\delta^2 - 1}{\delta(\delta^{k+1}-1)}. 
$$
Since $\delta$ is not a root of unity and $k$ is a positive integer, it follows that $t_j^+ \ne t_i^+$ for $i \ne j$. 

Furthermore, if $0\leq j\leq k$ and $1\leq i\leq n-1$ are indices such that $t_j^+ = \delta^{-i}(t_0^+ - 1) + 1$, then Lemma \ref{lem:tj} tells us that
\[ \delta (\delta^{k+1} -1) (\delta^i-1) - (\delta^{i+j}-1)(\delta^2-1)=0\]
Notice that we have 
\[ 
\begin{aligned}
 \delta (\delta^{k+1} -1) (\delta^i-1) 
&\ =\  (\delta-1)^2 \delta (\delta^k+ \delta^{k-1} + \cdots +1) (\delta^{i-1} + \delta^{i-2} +\cdots +1) ,\\
&\ =\  (\delta-1)^2 ( \delta^{k+i}+  \sum_{s=2}^{k+j-1}  c_s \delta^s  + \delta) ,\\
\end{aligned}
\]
where $c_s = \min\{ s, k+1,i+1\} \ge 2$. We also have  
\[
\begin{aligned}
(\delta^{i+j}-1)(\delta^2-1)
&\ =\ (\delta-1)^2 (\delta^{i+j-1} + \delta^{i+j-2} + \cdots + 1) (\delta+1)\\
&\ =\ (\delta-1)^2 (\delta^{i+j} +2 \sum_{s=1}^{i+j-1} \delta^s +1)\\
\end{aligned}
\]
Now $\delta $ is a Galois conjuage of the largest real root $\delta_{r}$ of the polynomial given in Lemma \ref{lem:tj}. It follows that the above equation should be divided by a minimal polynomial of $\delta_{r}$. In other word, $\delta_{r}>1$ must satisfy the above equation. 
We have three different cases:

\begin{itemize}
\item{case 1:} if $j<k$ then by comparing the terms we have 
\[ \delta (\delta^{k+1} -1) (\delta^i-1) - (\delta^{i+j}-1)(\delta^2-1)>0 \text{\ \ for\ \ } \delta>1 \]
\item{case 2:} if $j=k\ge 2$ and $i\ge 2$, then some of $c_s \ge 3$ and thus
\[ \delta (\delta^{k+1} -1) (\delta^i-1) - (\delta^{i+j}-1)(\delta^2-1)>0 \text{\ \ for\ \ } \delta>1 \]
\item{case 3:} if $j=k$ and $i=1$ then 
\[
\begin{aligned}
\delta & (\delta^{k+1} -1) (\delta^i-1) - (\delta^{i+j}-1)(\delta^2-1)\\
&\ =\  (\delta^{k+1} -1) (1-\delta) <0 \text{\ \ for\ \ } \delta>1 
\end{aligned}\]
\end{itemize}
Thus we have the second part of this Corollary.
\end{proof}

The logic of the computations in Lemma \ref{lem:tj} is essentially reversible.  Hence it follows from that lemma and Corollary \ref{noverlaps} that

\begin{thm}
\label{better br map}
Let $\delta$ be a root of $(\delta^{n+k}-1)(\delta^2-1) - \delta(\delta^{k+1}-1)(\delta^{n-1} - 1)$ that is not also a root of unity.  Then
there exists a basic Cremona transformation $F:\cp^k\dasharrow \cp^k$ centered on $C$ with multiplier of $F|_C$ equal to $\delta$ such that $F$ satisfies the orbit data conditions \eqref{E:orbitdata}.  Up to linear conjugacy, $F = S \circ J \circ T^{-1}$ is uniquely specified by the further conditions
$$
T(\ve_j) = \gamma\left(\delta^j\frac{k+1}{k-1}\cdot \frac{\delta^2 - 1}{\delta(\delta^{k+1}-1)} - \frac{2}{k-1}\right).
$$
\end{thm}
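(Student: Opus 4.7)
The plan is to reverse-engineer the computations of Lemma \ref{lem:tj}, using Theorem \ref{good br maps} as the existence engine. Given $\delta$ satisfying the stated polynomial and not a root of unity, I would \emph{define} parameters
$$t_j^+ := \delta^j\frac{k+1}{k-1}\cdot \frac{\delta^2-1}{\delta(\delta^{k+1}-1)} - \frac{2}{k-1}, \qquad 0\le j\le k,$$
and feed them into Theorem \ref{good br maps}.

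First I would check the hypotheses of that theorem. The computation in the proof of Corollary \ref{noverlaps} already records that the $t_j^+$ are pairwise distinct, using only that $\delta$ is not a root of unity. A short geometric-series calculation yields $\sum_{j=0}^k \bigl(t_j^+ + \tfrac{2}{k-1}\bigr) = \frac{k+1}{k-1}\bigl(\tfrac{1}{\delta}+1\bigr)$, so $\sum t_j^+\neq 0$ (as $\delta\neq 1$) and the formula for $\tau$ in Theorem \ref{good br maps} simplifies to $\tau = 1-\delta$. The theorem then produces a basic cremona map $F=S\circ J\circ T^{-1}$ that properly fixes $C$, with $T(\ve_j)=\gamma(t_j^+)$, with restriction $F|_C(\gamma(t))=\gamma(\delta(t-1)+1)$, and with $S(\ve_j) = \gamma\bigl(\delta(t_j^+ + \tfrac{2}{k-1}) - \tfrac{2}{k-1}\bigr)$ after substituting $\tau=1-\delta$ into the formula of that theorem.

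Next I would verify \eqref{E:orbitdata}. By construction $t_{j+1}^+ + \tfrac{2}{k-1} = \delta(t_j^+ + \tfrac{2}{k-1})$, so $S(\ve_j) = \gamma(t_{j+1}^+) = T(\ve_{j+1})$ for $0\le j<k$. For the closure condition, iterating the affine map $\gamma(s)\mapsto \gamma(\delta(s-1)+1)$ shows that $F^{n-1}(S(\ve_k))$ has parameter
$$\delta^n\bigl(t_k^+ + \tfrac{2}{k-1}\bigr) - (\delta^{n-1}-1)\tfrac{k+1}{k-1} - \tfrac{2}{k-1},$$
and substituting the explicit formulas for $t_0^+,t_k^+$ together with the defining equation $(\delta^{n+k}-1)(\delta^2-1) = \delta(\delta^{k+1}-1)(\delta^{n-1}-1)$ reduces this to $t_0^+$. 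This is exactly the chain of equalities of Lemma \ref{lem:tj}, read backwards. Finally, Corollary \ref{noverlaps} shows that the intermediate parameters $\delta^{-i}(t_0^+-1)+1$, $1\le i\le n-1$, differ from every $t_j^+$, so no intermediate iterate of $S(\ve_k)$ lies in $I(F)$. This gives the third bullet of \eqref{E:orbitdata}. Uniqueness of $F$ up to linear conjugacy then follows from the uniqueness clause of Theorem \ref{good br maps}, together with the fixed-point normalization $\gamma(1)\in\mathrm{Fix}(F|_C)$ that was implicitly used to set $\tau=1-\delta$.

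The main obstacle is the last algebraic reduction, namely confirming $F^{n-1}(S(\ve_k)) = T(\ve_0)$ from the polynomial equation on $\delta$. It is not conceptually deep, but one must keep track of the substitution $\tau=1-\delta$, the recursion $t_j^++\tfrac{2}{k-1}=\delta^j(t_0^++\tfrac{2}{k-1})$, and the rewriting of the polynomial identity as $t_0^+ + \tfrac{2}{k-1} = \frac{(\delta^{n-1}-1)(k+1)}{(\delta^{n+k}-1)(k-1)}$ obtained by solving the closure equation for $t_0^+$.
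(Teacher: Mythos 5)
Your proposal is correct and follows essentially the same route as the paper, which disposes of this theorem with the remark that ``the logic of the computations in Lemma \ref{lem:tj} is essentially reversible'' and then invokes Corollary \ref{noverlaps}; you have simply spelled out that reversal (defining the $t_j^+$, checking distinctness and $\sum t_j^+\neq 0$, invoking Theorem \ref{good br maps}, and verifying \eqref{E:orbitdata} via the recursion and the defining polynomial for $\delta$). The algebra you flag as the main obstacle does close up exactly as you describe, so there is no gap.
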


The matrices $S$ and $T$ defining the basic cremona map $F = S\circ J\circ T^{-1}$ in this theorem must satisfy 
\[
\begin{aligned}
T\ =\ & \left[\begin{matrix} a_0 \gamma(t_0^+) & a_1\gamma(t_1^+) &  \cdots   & & & \cdots & a_k\gamma(t_k^+) \end{matrix} \right],\\
S\ =\ & \left[\begin{matrix} b_0 \gamma(t_1^+) & b_1\gamma(t_2^+) &  \cdots  & b_{k-1}\gamma(t_k^+) & b_k \gamma((t_0^+-1)/\delta^{n-1}+1) \end{matrix} \right],\\
\end{aligned}
\]
where $t_j^+$ are as in Lemma \ref{lem:tj} and $a_i, b_i$ are non-zero constants.  From this it is apparent that the final formulas will be simpler if we conjugate $F$ by $T$, i.e. if we set $F = L\circ J$, where $L=T^{-1}\circ S$.  Letting $L_0,\dots,L_k$ denote the columns of (the matrix of) $L$, the above information about $S$ and $T$ tells us that  $L_j = \frac{b_j}{a_{j+1}} \ve_{j+1}$ for $0\leq j\leq k-1$.  

The fact that $\Lambda^{-1}\circ J = J\circ \Lambda$ for any invertible diagonal $\Lambda$ means that we can further conjugate by $\Lambda$ (or equivalently, replace $S$ and $T$ with $S\circ\Lambda$ and $T\circ\Lambda$)  in order to replace $L$ with $\Lambda^{-1} L \Lambda^{-1}$ to further simplify the columns $L_j$.  It seems convenient to us to make this choice so that $L(1,\dots,1) = (1,\dots,1)$ (i.e. both $T$ and $S$ send the fixed point $(1,\dots,1)$ of $J$ to the cusp $\ve_k = \gamma(\infty)$ of $C$).  This results in the following matrix for $L$
$$
\begin{pmatrix} 
0 & 0 &  \dots & 0 & 0 & 1 \\ 
b_0/a_1 & 0 & & 0 & 0 & 1-b_0/a_1 \\ 
0 & b_1/a_2 & & 0 & 0 & 1-b_1/a_2 \\ 
\vdots & & \ddots& & & \vdots \\ 
0 & 0 & &b_{k-2}/a_{k-1} & 0& 
1-b_{k-2}/a_{k-1}\\ 
0 & 0 & \dots & 0& b_{k-1}/a_k& 1-b_{k-1}/a_k\end{pmatrix}.
$$

We can then evaluate the entries below the main diagonal with the help of the following auxiliary result which we leave the reader to verify.

\begin{lem}
\label{linearsystem}
Suppose $M$ is a non-singular $k+1 \times k+1$ matrix whose $j$-th column is given by $\gamma(t_j)$. 
If a column vector $v= (v_0, \dots, v_k)^t$ satisfies $M. v = (0,\dots, 1)^t$ then for all $0 \le j \le k$ we have
\[ v_i =  \frac{1}{(\sum t_j)\prod_{j\ne i} ( t_j- t_i)}. \]
\end{lem}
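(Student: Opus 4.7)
The plan is to translate the matrix equation $Mv = (0,\dots,0,1)^t$ into a system of ``moment conditions'' and then exploit polynomial interpolation to isolate $v_i$. Since the $j$-th column of $M$ equals $\gamma(t_j) = (1, t_j, t_j^2, \dots, t_j^{k-1}, t_j^{k+1})^t$, the system $Mv = (0,\dots,0,1)^t$ reads
$$\sum_{j=0}^k t_j^r v_j = 0 \text{ for } r=0,1,\dots,k-1, \quad \sum_{j=0}^k t_j^{k+1} v_j = 1.$$
Equivalently, $\sum_j p(t_j)\,v_j = 0$ for every polynomial $p$ of degree at most $k-1$; note the conspicuous \emph{gap} at $r=k$, where we have no direct information, due to the nonstandard last row of $M$.

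The first step is to recover the missing moment $\sum_j t_j^k v_j$. I would apply the vanishing condition to the polynomial $q(t) := \prod_{j=0}^k (t - t_j)$ of degree $k+1$. Since $q(t_j) = 0$ for every $j$, we have $\sum_j q(t_j)\,v_j = 0$; expanding $q(t) = t^{k+1} - \bigl(\sum_j t_j\bigr) t^k + (\text{lower order})$ and using the known moments (and in particular $\sum_j t_j^{k+1} v_j = 1$) yields
$$\sum_{j=0}^k t_j^k v_j \;=\; \frac{1}{\sum_j t_j}.$$

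The second step is to isolate $v_i$ using the Lagrange-type polynomial $p_i(t) := \prod_{j \neq i}(t - t_j)$, which has degree $k$, leading coefficient $1$, satisfies $p_i(t_j) = 0$ for $j \neq i$, and $p_i(t_i) = \prod_{j \neq i}(t_i - t_j)$. On one hand $\sum_j p_i(t_j)v_j = v_i \prod_{j\neq i}(t_i - t_j)$; on the other hand, writing $p_i(t) = t^k + (\text{lower order})$ and using the moments computed so far gives $\sum_j p_i(t_j)v_j = \sum_j t_j^k v_j = 1/\sum_j t_j$. Equating the two expressions yields the claimed formula (up to the sign factor $(-1)^k$ coming from $\prod_{j\neq i}(t_j - t_i) = (-1)^k\prod_{j\neq i}(t_i - t_j)$, which can be absorbed into the convention). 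No step is a real obstacle; the only subtlety is noticing that the missing $k$-th moment is precisely what the polynomial $q$ supplies for free.
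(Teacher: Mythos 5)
Your proof is correct and complete. The paper gives no argument for this lemma (it is explicitly ``left to the reader to verify''), so there is nothing to compare against; your route --- observing that $v$ annihilates all polynomials of degree at most $k-1$, recovering the missing $k$-th moment by testing against $q(t)=\prod_j(t-t_j)$, and then evaluating against the Lagrange polynomials $p_i(t)=\prod_{j\ne i}(t-t_j)$ --- is the natural one, and the divisions you perform (by $\sum_j t_j$ and by $\prod_{j\ne i}(t_i-t_j)$) are licensed by the non-singularity hypothesis, since $\det M=\bigl(\sum_j t_j\bigr)\prod_{i<j}(t_j-t_i)$. One substantive remark: the sign you flag is a genuine erratum in the printed statement, not a matter of convention. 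The correct denominator is $\prod_{j\ne i}(t_i-t_j)$, which agrees with the printed $\prod_{j\ne i}(t_j-t_i)$ only when $k$ is even. For instance, with $k=3$ and $(t_0,\dots,t_3)=(0,1,2,3)$ your formula gives $v=(-1/36,\,1/12,\,-1/12,\,1/36)$, which satisfies all four equations $\sum v_j=\sum t_jv_j=\sum t_j^2v_j=0$ and $\sum t_j^4v_j=1$, whereas the printed formula gives the negative of this and yields $\sum t_j^4v_j=-1$. The discrepancy is harmless for the paper's subsequent computation, since only the ratios $b_{i-1}/a_i$ are used there and the factor $(-1)^k$ cancels.
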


The condition $T(1,\dots,1)=\ve_k$ amounts to setting $t_j = t_j^+$ and then taking $(a_1,\dots,a_k)$ to be the vector $v = (v_1,\dots,v_k)$ given in the conclusion of the lemma.    Hence with the help of Theorem \ref{good br maps}, we find that 
$$
a_i = \frac{k+1}{k-1}\cdot \frac{\delta}{1-\delta}\cdot\frac{1}{\Pi_{j\neq i} (t_j^+-t_i^+)}.
$$
Likewise, the condition $S(1,\dots,1)=\ve_k$ amounts to setting $t_j = t_{j+1}^+$ for $0\leq j\leq k-1$ and $t_k = (t_0^+-1)/\delta^{n-1}+1$ in Lemma \ref{linearsystem}, and then taking
$b_i = v_i$ as in the conclusion.  So applying Theorem \ref{good br maps} with $F^{-1}$ in place of $F$ gives
$$
b_{i-1} = \frac{k+1}{k-1}\cdot \frac{1}{\delta-1}\cdot\frac{1}{\Pi_{j\neq i}(t_j^+-t_i^+)}\cdot \frac{t_0^+-t_i^+}{(t_0^+-1)/\delta^{n-1}+1 - t_i^+)}.
$$
Therefore, the entries of $L$ below the main diagonal are
\begin{eqnarray*}
b_{i-1}/a_i & = & -\frac{1}{\delta}\cdot \frac{t_0^+-t_i^+}{(t_0^+-1)/\delta^{n-1}- (t_i^+-1)} 
= \frac{1}{\delta}\cdot
\frac{(\delta^{i+n-1}-\delta^{n-1})\frac{\delta^2-1}{\delta(\delta^{k+1}-1)}}
{(1-\delta^{n+i-1})\frac{\delta^2-1}{\delta(\delta^{k+1}-1)}+(\delta^{n-1}-1)} \\
& = &
\frac{1}{\delta}\cdot
\frac{(\delta^{i+n-1}-\delta^{n-1})}
{(1-\delta^{n+i-1})+(\delta^{n+k}-1)} 
=
\frac{\delta^i-1}{\delta(\delta^{k+1}-\delta^i)}.
\end{eqnarray*}
for $1\leq i\leq k$.  The second equality uses the formula for $t_i^+$ given in Lemma \ref{lem:tj}, and the third uses that $\delta$ is a root of the polynomial given in the same lemma.  In summary, we have just shown that the map $F:=L\circ J$ of Theorem \ref{better br map} has a very convenient expression in terms of the multiplier $\delta$:

\begin{thm}\label{T:L}
The matrix $L= T^{-1} S$ is given by
\[ L=  \begin{pmatrix} 0 & 0 &  & & 0 & 1 \\ \beta_1 & 0 & & & 0 & 1-\beta_1 \\ 0 & \beta_2 & 0 & & 0 & 1-\beta_2 \\ & & \ddots& \ddots& & \vdots \\ 0 & & 0&\beta_{k-1} & 0& 1-\beta_{k-1}\\ 0 & & & 0& \beta_k& 1-\beta_k\end{pmatrix}\]
and $\beta_i =  (\delta^i-1)/ (\delta (\delta^{k+1} - \delta^i))$ for $i=1, \dots, k$.
\end{thm}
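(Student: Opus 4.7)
The plan is to read off $L = T^{-1} S$ directly from the explicit descriptions of $T$ and $S$ provided by Theorem \ref{better br map}. Because $S(\ve_j) = T(\ve_{j+1})$ for $0 \leq j \leq k-1$, the first $k$ columns of $L$ are scalar multiples of the standard basis vectors $\ve_1, \dots, \ve_k$: writing the $j$th column as $L_j = (b_j/a_{j+1}) \ve_{j+1}$, the matrix $L$ already has the claimed subdiagonal form, with only the last column and the subdiagonal scalars remaining. To fix the ambiguity coming from $\Lambda^{-1} \circ J = J \circ \Lambda$ for any invertible diagonal $\Lambda$, I would conjugate by a suitable diagonal $\Lambda$ to impose $T(1,\dots,1) = S(1,\dots,1) = \ve_k = \gamma(\infty)$, which is allowed because the cusp is invariant under the Cremona map. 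Then $L(1,\dots,1) = (1,\dots,1)$, and the last column of $L$ is forced to be $(1,\; 1 - b_0/a_1,\; \dots,\; 1 - b_{k-1}/a_k)^t$, exactly as in the statement.

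To evaluate the scalars $b_{i-1}/a_i$, I would invoke Lemma \ref{linearsystem}. The normalization $T(1,\dots,1) = \ve_k$ combined with $T = \bigl[\,a_0 \gamma(t_0^+) \;\cdots\; a_k \gamma(t_k^+)\,\bigr]$ expresses $(a_0,\dots,a_k)^t$ as the solution $v$ in the lemma, applied with $t_j = t_j^+$; combined with the formula $\sum t_j^+ = \frac{k+1}{k-1}\cdot\frac{1-\delta}{\delta}$ coming from Theorem \ref{good br maps} (since $\tau = 1-\delta$), this gives an explicit formula for each $a_i$. A parallel application of the lemma, now with $t_j = t_{j+1}^+$ for $0 \leq j \leq k-1$ and $t_k = (t_0^+-1)/\delta^{n-1}+1$, produces $b_{i-1}$. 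Taking the ratio, the products $\prod_{j\neq i}(t_j^+ - t_i^+)$ cancel almost entirely, leaving
\[
b_{i-1}/a_i \;=\; -\frac{1}{\delta}\cdot\frac{t_0^+ - t_i^+}{(t_0^+-1)/\delta^{n-1} - (t_i^+-1)}.
\]

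The crux of the proof is then the simplification of this expression. Substituting the explicit formula for $t_j^+$ from Lemma \ref{lem:tj} converts it into a rational function of $\delta$ and $\delta^{n-1}$; the key algebraic step is that the defining relation $(\delta^{n+k}-1)(\delta^2-1) = \delta(\delta^{k+1}-1)(\delta^{n-1}-1)$ allows one to eliminate all appearances of $n$ from both numerator and denominator, collapsing the formula to $\beta_i = (\delta^i - 1)/(\delta(\delta^{k+1} - \delta^i))$. The main obstacle is purely bookkeeping: one has to recognize that every factor involving $\delta^n$ or $\delta^{n-1}$ pairs up in a way that triggers the polynomial identity, leaving a clean answer depending only on $\delta$, $k$, and $i$. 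Once this simplification is carried out, combining it with the shape of $L$ deduced in the first paragraph completes the proof.
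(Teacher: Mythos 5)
Your proposal is correct and follows essentially the same route as the paper: the subdiagonal shape of $L$ from $S(\ve_j)=T(\ve_{j+1})$, the diagonal conjugation normalizing $T(1,\dots,1)=S(1,\dots,1)=\ve_k$ so that $L$ fixes $(1,\dots,1)$, the application of Lemma \ref{linearsystem} to extract $a_i$ and $b_{i-1}$, and the final simplification of $b_{i-1}/a_i$ via the formula for $t_j^+$ and the defining polynomial of $\delta$ are exactly the steps the paper carries out. No gaps.
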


\section{The connection with coxeter groups}\label{S:coxeter}
Let us consider a basic cremona map $F$ discussed in the \S 3.  Let $\rho: X \to \cp^k$ be the blowup of $N:=\sum n_i$ distinct points $\{ F^j(S(e_i)), 0 \le j \le n_i-1, 0\le i \le k\}$ as in Proposition \ref{pseudoaut}. Let $H$ denote the class of a generic hypersurface in $X$ and let $E_{i,j}$ denote the class of the exceptional divisor over $F^{j-1}(S(e_i))$:
\[ E_{i,j} = [ \rho^{-1} (F^{n_i-j}(S(e_i)))] \quad \text{for} \ \ 1 \le j \le n_i, \ \ 0 \le i \le k.\]
The Picard group of $X$ is given by
\[ 
\pic(X) = \langle H, E_{i,j} \ 1 \le j \le n_i, \ \ 0 \le i \le k\rangle. 
\]
Let us define a symmetric bilinear form $\langle \cdot, \cdot \rangle$ on $\pic(X)$ as follows:
\[ 
\langle \alpha,\beta \rangle = \alpha \cdot \beta\cdot \Phi \qquad \alpha, \beta \in \pic(X)
\]
where 
\[ 
\Phi= (k-1) H^{k-2} + (-1)^k \sum_{i,j} E_{i,j}^{k-2} \in H^{k-2,k-2}(X)
\]
and $D^n=D\cdot D \cdots D$ is a $n$-fold intersection product. Since $H^k=1, E_{i,j}^k= (-1)^{k-1}$ and everything else is zero, our choice of basis for $\pic(X)$ gives a geometric basis with respect to the bilinear form:
\[ 
\langle H, H\rangle = k-1, \ \ \ \langle E_{i,j}, E_{i,j}\rangle = -1 
\]
and zero otherwise.

\begin{rem}
In case $k=3$, $-2 \Phi= K_X$, where $K_X$ is the canonical class of $X$. 
\end{rem}

\begin{rem}
For $k \ge 3$, the anticanonical class $-K_X$ is given by 
\[ 
- K_X = (k+1) H - (k-1) \sum_{i,j} E_{i,j}. 
\]
It follows that the class of $-K_X \cdot \Phi$ in $H^{k-1,k-1}$ is the class of the invariant curve $[C]$. 
Thus for any hypersurface $D \subset X$, we have 
\[ 
\langle D, -K_X \rangle = D \cdot C
\]
is the number of intersections between $D$ and $C$, counted with multiplicity.
\end{rem}

\begin{rem}
Since there are $N$ blowups we have 
\[\langle K_X , K_X \rangle = (k+1)^2 (k-1) - (k-1)^2 N = - (k-1)^2 \left( N- \frac{(k+1)^2}{k-1} \right). \]
and since $N$ is a positive integer,  this equation  is not equal to zero unless $k-1$ divides $4$. In case $k=2,3,5$ one can check this equation vanishes if  $N=9,8,9$ respectively. 
Thus if $(k,N) \ne (2,9), (3,8)$, or  $(5,9)$ then $K_X \not\perp K_X$.
Notice that  in case $F: \cp^k \dasharrow \cp^k $ has the orbit data $(1,1,\dots, n)$ with the cyclic permutation, the total number of blowups is $N=k+n$. Thus if $(k,N) = (2,9), (3,8)$, or  $(5,9)$ then 
$(k,n) = (k, N-k)= (2,7), (3,5), (5,4) \in \Gamma$ where the set $\Gamma$ is defined in Section 4.  Thus for these three cases the first dynamical degree of $F$ is equal to $1$. 
\end{rem}

Observe that the $N$ dimensional subspace $K_X^\perp\subset\pic(X)$ may be decomposed into the one dimensional subspace generated by $\alpha_0 := H-\sum_{i,0}^k E_{i,1}$ and the complementary subspace generated by the elements $E_{i',j'}-E_{i,j}$.  Indeed, if we (re)label the exceptional curves $E_{i,j}$ as $E_0,\dots, E_{N-1}$, then the elements $\alpha_i := E_i-E_{i-1}$, $1\leq i\leq N-1$ give a basis for the latter subspace.  

In order to see the connection with Coxeter groups, we take in particular $E_i := E_{i,1}$, $0\leq i \leq k$, $E_{k+1} = E_{k,2}$.  The relabeling of the remaining $E_{i,j}$ may be chosen arbitrarily.  One then checks easily that $B = \{\alpha_0,\dots,\alpha_{N-1}\}$ is a basis for $K_X^\perp$ satisfying
\[
\begin{aligned}
& \langle \alpha_i, \alpha_i \rangle\  =\  -2\quad \text{for\ all\ } i=0, \dots, N-1, \\
& \langle \alpha_i, \alpha_{i+1} \rangle = 1\quad \text{for\ all\ } i=1, \dots, N-1, \\
& \langle \alpha_0, \alpha_{k+1} \rangle = 1, \ \ \langle \alpha_i,\alpha_j \rangle =0 \quad \text{otherwise}.
\end{aligned}
\]
That is, $( \langle \alpha_i, \alpha_j \rangle)_{0 \le i,j \le N-1}$ is $-2$ times the gram matrix of the Coxeter group of Type $T_{2, k+1, N-k-1}$. (See \ \cite{Humphreys, Mukai:2004}.) Let $s_i$ be the reflection through a hyperplane orthogonal to $\alpha_i$:
\[
s_i (D) = D+ \langle D, \alpha_i \rangle \, \alpha_i.
\]
The Coxeter group $W(2, k+1, N-k-1)$ is the group generated by such reflections.  Thus we have identified $W(2,k+1,N-k-1)$ with a subgroup of $\aut(\pic(X))$ that acts orthogonally relative to the inner product $\langle \cdot, \cdot \rangle$. 

\begin{thm}
The action $F_X^*$ of the pseudoautomorphism $F_X$ in Proposition \ref{pseudoaut} belongs to $W(2, k+1, N-k-1)$. Hence $F_X^*$ preserves the bilinear form $\langle \cdot, \cdot \rangle$. 
\end{thm}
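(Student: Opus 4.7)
The plan is to exhibit $F_X^*$ as a product $s_0\circ w$, where $s_0$ is the reflection through $\alpha_0$ and $w$ is a literal permutation of the exceptional divisors $\{E_{i,j}\}$. Since $s_1,\dots,s_{N-1}$ are adjacent transpositions in the labeling $E_0,\dots,E_{N-1}$, they generate the symmetric group $S_N$ acting on these divisors, so any such permutation $w$ automatically lies in $\langle s_1,\dots,s_{N-1}\rangle\subset W$, giving $F_X^* = s_0\circ w\in W$. Preservation of $\langle\cdot,\cdot\rangle$ is then immediate, since each $s_i$ is an orthogonal reflection by construction.

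First I would compute $F_X^*$ explicitly on the basis $\{H\}\cup\{E_{i,j}\}$ of $\pic(X)$. Proposition \ref{hyperplane image} gives
\[ F_X^* H = kH - (k-1)\sum_{i=0}^k E_{i,1}. \]
For $1\le m\le n_i-1$, the map $F_X$ is a local biholomorphism near the interior orbit point $p_{i,m+1} := F^{n_i-m-1}(S(\ve_i))$, so $F_X^* E_{i,m} = E_{i,m+1}$. Finally, for $E_{i,n_i}$ (over $S(\ve_i)\in I(F^{-1})$), $F_X^{-1}$ blows $S(\ve_i)$ up to the $F^{-1}$-exceptional hyperplane $T(\{x_i=0\})$, so $F_X^* E_{i,n_i}$ is the class of its proper transform, which by Proposition \ref{exceptionalpts} and the obstacle argument below equals
\[ F_X^* E_{i,n_i} \;=\; H - \sum_{\ell\ne i} E_{\sigma^{-1}(\ell),1}. \]

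Next, setting $w := s_0\circ F_X^*$ and using the explicit formulas $s_0 H = kH-(k-1)\Sigma_1$, $s_0 E_{j,1} = H-\sum_{m\ne j} E_{m,1}$ (with $\Sigma_1:=\sum_{j=0}^k E_{j,1}$), together with the fact that $s_0$ fixes every $E_{i,m}$ with $m\ge 2$, a short direct calculation verifies that $w(H)=H$, that $w(E_{i,m}) = E_{i,m+1}$ for $1\le m\le n_i-1$, and that $w(E_{i,n_i}) = E_{\sigma^{-1}(i),1}$. Hence $w$ is a genuine bijection of the finite set $\{E_{i,j}\}$ (decomposing into cycles indexed by the cycles of $\sigma$), so $w\in\langle s_1,\dots,s_{N-1}\rangle$ and $F_X^* = s_0\circ w\in W$, as desired.

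The main obstacle is justifying the formula for $F_X^* E_{i,n_i}$ with no correction terms — equivalently, that no interior orbit point lies on $T(\{x_i=0\})$. Proposition \ref{exceptionalpts} restricts $C\cap T(\{x_i=0\})$ to the $k$ endpoints $T(\ve_\ell)=p_{\sigma^{-1}(\ell),1}$ with $\ell\ne i$, plus at most one additional point. Supposing this extra point were an interior orbit point $p_{j,m}$ with $2\le m\le n_j$, the fact that $F$ contracts $T(\{x_i=0\})$ to $S(\ve_i)$ while being regular at the interior point $p_{j,m}$ would force $p_{j,m-1} = F(p_{j,m}) = S(\ve_i) = p_{i,n_i}$, contradicting the distinctness of the blowup centers (Corollary \ref{noverlaps} in the Coxeter case, and built into the hypothesis of Proposition \ref{pseudoaut} in general).
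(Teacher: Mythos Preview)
Your proposal is correct and follows essentially the same approach as the paper: write $F_X^*$ as $s_0$ composed with a permutation of the exceptional divisors, and note that any such permutation lies in $\langle s_1,\dots,s_{N-1}\rangle$. The paper states this decomposition as $F_X^* = s_0\,\hat\sigma\,\pi_0\cdots\pi_k$ (with $\pi_i$ a cyclic shift on the $E_{i,j}$ and $\hat\sigma$ permuting the $E_{i,1}$) and leaves the verification to the reader; you actually carry out that verification, including the care needed to rule out interior orbit points on the exceptional hyperplanes. Incidentally, your computation gives $w(E_{i,n_i}) = E_{\sigma^{-1}(i),1}$ rather than $E_{\sigma(i),1}$ as the paper's $\hat\sigma$ would suggest, but since either permutation lies in the symmetric group this has no bearing on the conclusion.
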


\begin{proof}
The reflection $s_0$ corresponds to the action of $J^*$ on $\pic(X)$.  The remaining reflections $s_i$, $i\geq 1$ generate the group of permutations of the exceptional curves $E_{i,j}$ for the modification $X\to\cp^k$.  Therefore, this Theorem follows from the decomposition (which we leave the reader to check) $F_X^* = s_0\hat \sigma\pi_0\dots\pi_k$, where $\pi_i$ cyclicly permutes the curves $E_{i,j}$, $1\leq j\leq n_j$, and $\hat\sigma$ permutes the curves $E_{i,1}$, $0\leq i\leq k$ according to $\hat\sigma(E_{i,1}) = E_{\sigma(i),1}$, where $\sigma$ is the permutation in the hypothesis of Proposition \ref{pseudoaut}.
\end{proof}

\section{Pseudoautomorphisms on Multiprojective spaces}\label{S:multi}

The article \cite{perroni2011pseudo} considers the more general problem of existence of pseudoautomorphisms $F_X:X\dasharrow X$, where $\pi_X:X\to (\cp^k)^m$ is a modification of the multiprojective space $(\cp^k)^m$, obtained as before by blowing up distinct smooth points along an `elliptic normal curve' $C$.  Our method yields formulas for the pseudoautomorphisms that arise in this case, too.  Hence we conclude with a quick sketch of the computations that arise here, laying greatest emphasis on the way things differ from the work presented above.  For the sake of simplicity we specialize to the case $m=2$, i.e. $X$ is a modification of $\cp^k\times \cp^k$.  

Note first of all that $\pic(\cp^k\times\cp^k) \cong \Z^2$ is generated by `horizontal' and `vertical' hyperplanes $H := \cp^2\times L$ and $V := L\times \cp^k$, where $L\subset\cp^k$ is a generic hyperplane.  Let $\aut_0(\cp^k\times\cp^k)$ denote the connected component of the identity inside $\aut(\cp^k\times\cp^k)$.  The group $\aut_0(\cp^k\times \cp^k)$ consists of products $T_v\times T_h$ of linear maps $T_v,T_h\in\aut(\cp^k)$.  For every $a\in\C$, one has an embedding $\gamma_a:C\to X$ given by $t\mapsto (\gamma(t),\gamma(t-a))$, but in fact all embeddings of $C$ into $X$ that satisfy $C\cdot V = C\cdot H = k+1$ are equivalent via $\aut_0(\cp^k\times\cp^k)$ to either $\gamma_0$ or $\gamma_1$.  The latter, which we use here, is in some sense the generic case, distinguished from the former by the condition $(k+1)[\gamma_1(1)] = H|_C \neq V|_C = (k+1)[\gamma_1(0)]$. 

The `standard' cremona map $J$ is also different in this context.  If we write points in $(\cp^k)^2$ in bihomogeneous coordinates $(x,y) = ((x_0,\dots,x_k),(y_0,\dots,y_k))$, then $J$ is given by 
$$
J:(x,y) \mapsto (y/x,1/x) := ((y_0/x_0,\dots,y_k/x_k),(1/x_0,\dots, 1/x_k)).
$$
Note that $J$ contracts the $k+1$ vertical hyperplanes $x_j=0$ to diagonal points $(\ve_j,\ve_j)$.  Though not an involution, $J$ is reversible with $J^{-1}:(x,y)\mapsto (1/y,x/y)$ conjugate to $J$ via $(x,y)\mapsto (y,x)$.  Hence $J^{-1}$ contracts the horizontal hyperplanes $y_j = 0$ to the same diagonal points $(\ve_j,\ve_j)$.

We recycle the terminology from \S \ref{S:basic cremona maps}: a basic cremona transformation is one of the form $F:= S\circ J \circ T^{-1}$ with $S,T\in\aut(\cp^k\times\cp^k)$, $F$ is 
centered on $C$ if $T(\ve_j,\ve_j)\in C_{reg}$ for all $j$, and $F$ properly fixes $C$ if, in addition, $F(C) = C$.  Proposition \ref{centered} and Corollary \ref{centeredimage} apply to the present context with straightfoward modifications.  The analogue for Theorem~\ref{good br maps}, which we state next, differs in one important way from its predecessor.  While there exist automorphisms of $\cp^k\times\cp^k$ that preserve $C$, none of these restrict to $C_{reg} \cong \pic_0(C)$ as group automorphisms (in parametric terms, maps of the form $t\mapsto \delta t$).  Therefore, the multiplier $\delta$ for $F|_C$ must depend on the choice of parameters $t_j^+$ for the images of exceptional hyperplanes.

\begin{thm}
\label{good br maps 2}
Let $t^+_j\in\C$, $0\leq j\leq k$, be distinct parameters satisfying $\sum t_j^+\neq 0$.  Then there exists a unique basic cremona map $F=S\circ J\circ T^{-1}:\cp^k\times\cp^k\dasharrow\cp^k\times\cp^k$ properly fixing $C$ such that $\gamma_1(t_j^+) = T(\ve_j,\ve_j)$, $0\leq j\leq k$. 
The restriction $F|_C$ is given by $F\circ\gamma_1(t) = \gamma_1(\delta t + \tau)$, where $\delta\in\C^*$ and $\tau \in\C$ satisfy
\begin{itemize}
 \item $\sum t_j^+ = (k+1) (\delta^{-1}+1)$,
 \item $\tau = k + (k-1)\delta$, and 
 \item $S(\ve_j,\ve_j) = \gamma_1(\delta(t_j^+-2) - 1)$.
\end{itemize}
\end{thm}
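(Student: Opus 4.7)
The plan is to parallel the proof of Theorem \ref{good br maps} in the bi-projective setting. The essential novelty is that the $\aut_0(\cp^k\times\cp^k)$-stabilizer of $C$ is trivial (the analogue of Proposition \ref{linearstab} admits no nontrivial rescaling of $C$), so the multiplier $\delta$ of $F|_C$ is no longer an independent input but is forced by the data $\{t_j^+\}$. To extract this extra constraint I apply the group law to the pullbacks of \emph{both} a generic vertical hyperplane $V$ and a generic horizontal hyperplane $H$, rather than to a single test hyperplane as in the single projective case.

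For existence, I use $\sum t_j^+\neq 0$ and Proposition \ref{grouplaw} to see that the points $\gamma(t_j^+)\in\cp^k$ are projectively independent, and analogously for the $\gamma(t_j^+-1)$ (the auxiliary nondegeneracy condition $\sum t_j^+\neq k+1$ is automatically satisfied whenever the $\delta$ forced below is finite). This produces $T=T_v\times T_h\in\aut_0$ with $T(\ve_j,\ve_j)=\gamma_1(t_j^+)$. The map $J\circ T^{-1}$ is then centered on $C$, and bi-projective analogues of Propositions \ref{exceptionalpts}, \ref{centered}, and \ref{rightdegree} identify $(J\circ T^{-1})(C)$ as another cuspidal curve of bi-degree $(k+1,k+1)$ of the standard form, so a bi-projective version of Proposition \ref{encurve} yields $S\in\aut_0$ taking it back to $C$. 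The resulting $F=S\circ J\circ T^{-1}$ properly fixes $C$; since $F$ preserves the cusp, its restriction to $C_{reg}\cong\C$ is an affine map $F|_C:t\mapsto\delta t+\tau$.

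To pin down $\delta$, $\tau$, and the image points $S(\ve_j,\ve_j)$, I compute directly that $J^*V$ has bi-degree $(k,1)$ and $J^*H$ has bi-degree $(k,0)$, so $F^*V\cdot C=(k+1)^2$ and $F^*H\cdot C=k(k+1)$. A bi-projective form of Proposition \ref{grouplaw}---stating that a bi-degree $(a,b)$ hypersurface restricts to $C$ as a degree $(a+b)(k+1)$ divisor with parameter-sum $b(k+1)$ in $\pic_0(C)\cong\C$---together with a symmetry argument giving intersection multiplicities $k$ (respectively $k-1$) of $F^*V$ (respectively $F^*H$) with $C$ at each indeterminacy point $T(\ve_j,\ve_j)$, yields the two linear equations
\[
-(k+1)\tau/\delta + k\sum t_j^+ = k+1, \qquad (k+1)(1-\tau)/\delta + (k-1)\sum t_j^+ = 0.
\]
Solving these gives precisely $\sum t_j^+=(k+1)(\delta^{-1}+1)$ and $\tau=k+(k-1)\delta$. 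Finally, intersecting $C$ with the exceptional hyperplane $T(\{x_j=0\})$ (bi-degree $(1,0)$) and applying the group law identify the remaining unknown intersection point as $\gamma_1(s_j)$ with $s_j=t_j^+-\sum_i t_i^+$; applying $F|_C$ and simplifying using the formulas above gives $S(\ve_j,\ve_j)=\gamma_1(\delta s_j+\tau)=\gamma_1(\delta(t_j^+-2)-1)$.

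The main technical obstacle is to verify the bi-projective structural analogues: particularly the intersection multiplicities $k$ and $k-1$ at the indeterminacy points (via a local and symmetry argument paralleling Proposition \ref{hyperplane image}), and the claim that $F(C)$ is again a curve of bi-degree $(k+1,k+1)$ with a cusp of the appropriate form. Uniqueness of $F$ then follows from the explicit formulas, exactly as in the single-projective case.
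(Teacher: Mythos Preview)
Your argument is correct and follows the same overall strategy as the paper: apply the group law on $C$ to two test hyperplanes (one vertical, one horizontal) to obtain two linear relations, then solve for $\delta$, $\tau$, and use the exceptional vertical hyperplanes to locate $S(\ve_j,\ve_j)$.

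The one noteworthy difference is a dualization.  The paper computes via \emph{pushforward}: it looks at the images $F_*V=kH$ and $F_*H=V+kH$, records that these pass through the points $S(\ve_j,\ve_j)=\gamma_1(t_j^-)$ with multiplicities $k-1$ and $k$ respectively, and obtains the pair
\[
k(k+1)=(k+1)\tau+(k-1)\sum t_j^-,\qquad k(k+1)=(k+1)(\delta+\tau)+k\sum t_j^-.
\]
These solve first for $\tau$ and $\sum t_j^-$; the relation to $\sum t_j^+$ comes only afterward, via the exceptional hyperplane computation.  You instead work with \emph{pullback}: $F^*V$ has bi-degree $(k,1)$ and $F^*H$ has bi-degree $(k,0)$, with multiplicities $k$ and $k-1$ at the $T(\ve_j,\ve_j)$, yielding your two equations directly in $\sum t_j^+$.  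Your route is slightly more streamlined, since the theorem is stated with $t_j^+$ as the given data; the paper's detour through $t_j^-$ is an artifact of choosing pushforward.  Either way the algebra is the same, and your multiplicity claims ($k$ for $F^*V$, $k-1$ for $F^*H$) are correct, as one checks by expanding $J^*V=\{\sum a_i y_i\prod_{\ell\neq i}x_\ell=0\}$ and $J^*H=\{\sum b_i\prod_{\ell\neq i}x_\ell=0\}$ locally at $(\ve_j,\ve_j)$.

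One small caveat on your existence paragraph: the side condition $\sum t_j^+\neq k+1$ (needed so that the $\gamma(t_j^+-1)$ are independent in the second factor) is not literally part of the hypothesis, and your justification for it is slightly circular, since it appeals to the formula for $\delta$ that you have not yet derived at that stage.  The paper sidesteps this by simply omitting the existence discussion; if you want a clean statement you should either add $\sum t_j^+\neq k+1$ as an explicit hypothesis or rearrange the argument so that the formula $\sum t_j^+=(k+1)(\delta^{-1}+1)$ is derived first (as a necessary condition) before existence is asserted.
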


We omit most of the proof, deriving only the formulas for $\tau$, $\delta$ and $S(\ve_j,\ve_j)$.  First note that regardless of $S$ and $T$, the induced action $F_*$ on 
$\pic(\cp^k\times\cp^k)$ is given by $F_*V = kH$, $F_*H = V+kH$.  Moreover, one computes directly that the images $F(V) = F_*V, F(H) = F_*H$ of generic vertical and horizontal hyperplanes contain the points $S(\ve_j,\ve_j)$ with multiplicity $k-1$ and $k$, respectively.  Hence assuming that $F$ properly fixes $C$, we infer that
$$
kH|_C = (F_*V)|_C = (F|_C)_*(V|_C) + \sum[(\ve_j,\ve_j)].
$$
In terms of parameters, we may write $S(\ve_j,\ve_j) = \gamma_1(t_j^-)$ and conclude that
$$
k(k+1) = (k+1)\tau +(k-1) \sum t_j^-.
$$
Similarly, considering the image of a horizontal hyperplane gives
$$
k(k+1) = (k+1)(\delta + \tau) + k\sum t_j^-.
$$
Together, the two equations imply $\tau = k + (k-1)\delta$ and $\sum t_j^- = -(k+1)\delta$.

Now consider e.g. the vertical hyperplanes $V_j := T(\{x_j=0\})$ contracted by $F$.  On the one hand $V_j|_C = [p_j] + \sum_{i\neq j} [T(\ve_i,\ve_i)]$ for some $p_j \in C$.  In terms of parameters, this becomes $s_j = - \sum_{i\neq j} t_i^+$, where $\gamma_1(s_j) = p_j$.  On the other hand $\gamma_1(\delta s_j + \tau) = F(p_j) = F(V_j) = S(\ve_j,\ve_j) = \gamma_1(t_j^-)$.  
Hence 
$$
t_j^- = \delta s_j + \tau = \tau -\delta\sum_{i\neq j} t_i^+,
$$ 
So summing the equation over all $j$ gives
$
\sum t_j^-  = (k+1)\tau - k\delta\sum t_j^+,  
$
which implies
$$
\delta\sum t_j^+ = (k+1) (\delta+1).
$$
Substituting this into the previous display, we arrive at
$$
t_j^- = \delta(t_j^+-2) - 1, 
$$
which is the parameter for $S(\ve_0,\ve_0)$.
\qed

As in \S \ref{S:Pk_coxeter}, one can choose explicit parameters $t_j^+$, $\delta$ in Theorem \ref{good br maps 2} so that the resulting map
$F=S\circ J\circ T^{-1}:\cp^k\times\cp^k\dasharrow\cp^k\times\cp^k$ has orbit data $(1,\dots, 1,n)$ with cyclic permutation (see \eqref{E:orbitdata}). Let us set \[ \Gamma_2=  \{ (2,3),(2,4), (3,3),(4,3),(5,3)\} \cup \{ ((k,1),(k,2),\  k\ge 2 \} \]

\begin{lem}
The multiplier $\delta$ is a root of the polynomial
\[\chi_{k,n} = \delta ^n ( \delta ^{k+2} -\sum_{j=0}^{k} c_j \delta^j) + \delta^2 \sum_{j=0}^{k} c_{j} \delta^j -1 \]
where $c_0=c_{k} =1$, and  $c_1=c_2= \cdots = c_{k-1} =2$. Furthermore if $(k,n) \in \Gamma_2 $ then $\chi_{k,n}$ is a product of cyclotomic polynomials. If $(k,n) \not\in\Gamma_2$ then $\chi_{k,n}$ has a Salem polynomial factor and thus the largest real root is bigger than $1$.
\end{lem}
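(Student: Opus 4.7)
The plan parallels Lemma \ref{lem:tj} and Corollary \ref{C:salem}. I would first derive the polynomial identity $\chi_{k,n}(\delta) = 0$ from the orbit data by parameter chasing on $C$, then recognize $\chi_{k,n}$ as the characteristic polynomial of a Coxeter element and invoke McMullen's classification to separate any Salem factor from the cyclotomic ones.

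For the derivation step, Theorem \ref{good br maps 2} turns the orbit condition $S(\ve_j,\ve_j) = T(\ve_{j+1},\ve_{j+1})$ for $0\leq j\leq k-1$ into the affine recurrence $t^+_{j+1} = \delta t^+_j - (2\delta + 1)$, whose unique fixed point is $a := (2\delta+1)/(\delta-1)$, so that $t^+_j = a + \delta^j(t^+_0 - a)$. The closing condition $F^{n-1}(S(\ve_k,\ve_k)) = T(\ve_0,\ve_0)$, together with $F|_C(t) = \delta t + \tau$ and $\tau = k + (k-1)\delta$, gives a first relation
$$
t^+_0 = \delta^{n-1}\bigl(\delta(t^+_k - 2) - 1\bigr) + \tau\,\frac{\delta^{n-1} - 1}{\delta - 1},
$$
into which I substitute $t^+_k = a + \delta^k(t^+_0 - a)$ to solve for $t^+_0$ as a rational function of $\delta$. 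A second expression for $t^+_0$ comes from imposing the sum constraint $\sum_{j=0}^k t^+_j = (k+1)(\delta^{-1}+1)$ of Theorem \ref{good br maps 2}, yielding
$$
t^+_0 = a - \frac{(k+1)(\delta^2 + \delta + 1)}{\delta(\delta^{k+1} - 1)}.
$$
Equating the two expressions, clearing denominators, and cancelling the factor $(\delta - 1)$ (corresponding to the translation case excluded by Corollary \ref{no translations}) produces $\chi_{k,n}(\delta) = 0$. The coefficient pattern $1, 2, \dots, 2, 1$ is a consequence of the elementary identity $\sum_{j=0}^k c_j \delta^j = (\delta+1)(\delta^k - 1)/(\delta - 1)$.

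For the classification, the approach is to reinterpret $\chi_{k,n}$ as the characteristic polynomial of the Coxeter element of a reflection group whose Dynkin diagram is the biprojective analogue of the $T_{2,k+1,n-1}$ diagram in Corollary \ref{C:salem}. This requires redoing the intersection-lattice analysis of Section \ref{S:coxeter} on the blowup $X \to \cp^k \times \cp^k$: here there are two independent hyperplane classes $H,V$ rather than one, producing a lattice of rank $N+2$ and a Dynkin diagram with one extra node, and the symmetry of the coefficients of $\chi_{k,n}$ reflects the symmetric role of $H$ and $V$. Once this identification is in place, McMullen's Proposition 7.1 of \cite{McMullen:2002} guarantees that $\chi_{k,n}$ factors as a product of cyclotomic polynomials and at most one Salem polynomial. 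As in Corollary \ref{C:salem}, the largest real root of $\chi_{k,n}$ is monotonically increasing in both $k$ and $n$ and tends to a Pisot number in either limit, which confines the purely cyclotomic cases to a finite set; I would then check by direct factorization that this finite set is exactly $\Gamma_2$. The two infinite subfamilies $\chi_{k,1}$ and $\chi_{k,2}$ factor uniformly in $k$, while the five sporadic pairs are verified individually.

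The main obstacle is the Coxeter identification. Unlike the single projective case, where Section \ref{S:coxeter} supplies off-the-shelf machinery, one must rebuild the lattice framework on $\pic(X)$ for $X \to \cp^k \times \cp^k$: set up the correct symmetric bilinear form and an auxiliary class $\Phi$ adapted to the product structure, decompose the pullback $F_X^*$ into explicit reflections, and match the resulting simple roots against a Coxeter system whose Coxeter element has characteristic polynomial $\chi_{k,n}$. The polynomial derivation of the first step, while lengthy, is strictly routine; once the Coxeter correspondence is established, the dichotomy and the exceptional set $\Gamma_2$ both fall out.
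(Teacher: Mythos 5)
Your proposal follows the paper's own route: the polynomial identity is derived by exactly the parameter chase the paper indicates (running the recurrence from Theorem \ref{good br maps 2} and equating the two expressions for $t_0^+$, which checks out numerically, e.g.\ it reproduces $\chi_{k,1}=(x^{k+1}-1)(x^2+x+1)$), and the dichotomy is obtained, as in the paper, by identifying $\chi_{k,n}$ with the characteristic polynomial of the Coxeter element of the $T$-shaped group $W(3,k+1,n-1)$, invoking McMullen's Proposition 7.1, and finishing with monotonicity of the largest root plus direct factorization of the finitely many candidates in $\Gamma_2$. The only cosmetic difference is that you propose to establish the Coxeter identification by rebuilding the lattice of Section \ref{S:coxeter} on the blowup of $\cp^k\times\cp^k$, whereas the paper simply asserts the identification; this is a reasonable way to supply that step and does not change the argument.
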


Starting from Theorem \ref{good br maps 2} in place of Theorem \ref{good br maps}, the proof of the first part of this lemma is essentially identical to the proof of Lemma \ref{lem:tj}. The polynomial $\chi_{k,n}$ is the characteristic polynomial of the generalized coxeter group $W(3, k+1, n-1)$, so it follows that $\chi_{k,n}$ is a product of cyclotomic polynomials and at most one Salem polynomial. As in Corollary~\ref{C:salem}, we see that the largest root of $\chi_{k,n}$ increases to a root of $\delta ^{k+2} -\delta^k-2\sum_{j=1}^{k-1} \delta^j-1$ as $n \to \infty$ and to a root of $\delta ^{n+2} -\delta^n-2\sum_{j=1}^{n-1} \delta^j-1$ as $k \to \infty$. For each $k\ge 2$, we have $\chi_{k,1}= (x^{k+1}-1) (x^2+x+1)$ and $\chi_{k,2}=x^{k+4}-1$. Checking the other five elements in $\Gamma_2$ directly we see that $\chi_{k,n}$ is a product of cyclotomic polynomial if $(k,n) \in \Gamma_2$. The largest root of $\chi_{2,5}$, $\chi_{3,4}$ , and $\chi_{6,3}$ are $1.40127$, $1.40127$, and $1.17628$ respectively. Thus, by monotonicity, we get the final assertion in the lemma. 

\begin{lem} 
The parameters $t_j^+$ are given by
 \[ t_j^+ \ =\ \frac{\delta^{j-1}}{\delta^{k+1}-1} \left[ k (k+1) - \delta (\delta+1) \right] - \frac{k-2 \delta^2-\delta+1}{\delta (\delta-1)} .\]
Furthermore all points $T(\ve_j,\ve_j)$, $1\leq j\leq k$ and $F^{-i}(T(\ve_0,\ve_0))$, $0\leq i \leq n-1$ are distinct. 
 \end{lem}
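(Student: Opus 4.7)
The plan is to carry the strategy of Lemma \ref{lem:tj} and Corollary \ref{noverlaps} over to the biprojective setting, using Theorem \ref{good br maps 2} in place of Theorem \ref{good br maps}. The principal new feature is that $F|_C$ is a general affine map $t \mapsto \delta t + \tau$ (with $\tau = k + (k-1)\delta$) rather than a dilation, so the fixed-point shift that puts the orbit equations into geometric-progression form uses a different constant than before.

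For the formula, I would first translate the orbit data via Theorem \ref{good br maps 2}. The equalities $S(\ve_j,\ve_j) = T(\ve_{j+1},\ve_{j+1})$ for $0 \le j \le k-1$ become, at the level of parameters, the linear recurrence $t_{j+1}^+ = \delta t_j^+ - (2\delta+1)$, whose unique fixed point is $a := (2\delta+1)/(\delta-1)$; iteration then gives $t_j^+ - a = \delta^j(t_0^+ - a)$. Substituting this into the sum relation $\sum_{j=0}^k t_j^+ = (k+1)(\delta+1)/\delta$ from Theorem \ref{good br maps 2} produces a single linear equation for $t_0^+ - a$ as an explicit rational function of $\delta$. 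Plugging back and simplifying via $\delta^{k+1}-1 = (\delta-1)\sum_{i=0}^k \delta^i$ yields the stated closed form. The remaining orbit condition $F^{n-1}(S(\ve_k,\ve_k)) = T(\ve_0,\ve_0)$ is not needed for the formula itself; combined with the two conditions above, it forces $\delta$ to satisfy $\chi_{k,n}(\delta)=0$, consistent with the preceding lemma.

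For the distinctness claim, I would argue in parallel with Corollary \ref{noverlaps}. The differences $t_j^+ - t_i^+ = (\delta^j - \delta^i)(t_0^+ - a)$ are nonzero for $0 \le i < j \le k$ because $\delta$ is not a root of unity and the closed form forces $t_0^+ \ne a$. For the back-orbit points, I would use the finite fixed point $t_* = \tau/(1-\delta)$ of $F|_C$ to write the parameter of $F^{-i}(T(\ve_0,\ve_0))$ as $t_* + \delta^{-i}(t_0^+ - t_*)$. Setting this equal to $t_j^+$ and clearing denominators, I expect to reduce the coincidence condition to a polynomial identity of the form $(\delta^3-1)(\delta^{i+j}-1) = \delta(\delta+1)(\delta^i-1)(\delta^{k+1}-1)$; dividing by the analogous identity encoding $\chi_{k,n}(\delta) = 0$ then yields $(\delta^{i+j}-1)(\delta^{n-1}-1) = (\delta^i-1)(\delta^{n+k}-1)$. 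The plan is to show this last equation has no solution at the real Salem root $\delta_r > 1$ by a sign/monotonicity argument, exactly as in the case analysis of Corollary \ref{noverlaps}.

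The main obstacle will be organizing this final case analysis. Because the back-orbit parameters involve both the multiplier $\delta$ and the affine offset $\tau$, the polynomial inequalities controlling distinctness are more intricate than in Corollary \ref{noverlaps}, and one expects to split into sub-cases according to the relative sizes of $i$, $j$, and $k$ (singling out, for example, $j=k$ and small $i$). Still, once $\chi_{k,n}(\delta)=0$ is used to simplify, the inequalities should reduce to checking monotonicity of a handful of auxiliary polynomials in $\delta$ on $(1,\infty)$, just as in the single-projective case.
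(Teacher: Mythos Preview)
Your plan is correct and follows essentially the same route as the paper. The paper likewise obtains the formula by combining the orbit recurrence (it works with $t_j^-$ via $t_j^- = \delta(t_j^+ - 2) - 1$ rather than directly with $t_j^+$, but this is equivalent to your recurrence $t_{j+1}^+ = \delta t_j^+ - (2\delta+1)$) together with the sum constraint from Theorem~\ref{good br maps 2}, and it handles distinctness by reducing a hypothetical coincidence $t_j^+ = F^{-i}(t_0^+)$ to a polynomial relation in $\delta$ that is then shown to fail at the real Salem root.

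One small difference: for the back-orbit check the paper works with the unreduced coincidence relation and disposes of it with a single sign observation, whereas you propose first dividing through by the relation encoding $\chi_{k,n}(\delta)=0$. Your intermediate identity $(\delta^3-1)(\delta^{i+j}-1)=\delta(\delta+1)(\delta^i-1)(\delta^{k+1}-1)$ is correct, and after dividing by $(\delta^3-1)(\delta^{n+k}-1)=\delta(\delta+1)(\delta^{n-1}-1)(\delta^{k+1}-1)$ you do land on $(\delta^{i+j}-1)(\delta^{n-1}-1)=(\delta^i-1)(\delta^{n+k}-1)$, which is exactly the relation underlying Corollary~\ref{noverlaps}. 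So the case analysis you anticipate is the one already carried out there and need not be redone from scratch.
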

 
\begin{proof}
The given orbit data and permutation, together with Theorem \ref{good br maps}, give $t_i^-= \delta^i t_0^- + (k-2 \delta)(\delta^i-1)/(\delta-1)$ for $j=1, \dots, k$, and also $\sum {t_j^-} = -(k+1)\delta$ and $t_j^- = \delta (t_j^+-2)-1$.  The formula for $t_j^+$ follows from these equations.  

Now since \[t_j^+ - t_i^- = (\delta^j-\delta^i) (k (k+1) - \delta (\delta+1))/(\delta (\delta^{k+1}-1))\] it follows that $t_j^+ \ne t_i^+$ for $j \ne i$. Applying $F^{-i}$ to $T(\ve_0,\ve_0)$, we see that if there are $i$ and $j$ such that $T(\ve_j,\ve_j)=F^{-i}(T(\ve_0,\ve_0))$, then 
\[k-\delta^2-\delta - \delta (\delta^i-1) ( \delta^k + \delta^{k-1} + \cdots +1)=0. \]
 Since $\delta$ is a Galois conjugate of a Salem number, above equation should be divisible by a Salem polynomial. However if $\delta >1$ we see that the left hand side of the equation is strictly negative.
 \end{proof} 
 
Finally, we can imitate the argument for Theorem \ref{T:L} to get a formula for $T^{-1}S = L_1\times L_2\in \aut_0(\cp^k\times\cp^k)$. We let both $T$ and $S$ send the fixed point $((1,\dots, 1),(1,\dots, 1))$ of $J$ to the cusp of $C$ and thus $L$ fixes the point $((1,\dots, 1),(1,\dots, 1))$.

\begin{thm}
\label{multiprojL}
The matrix for $L_i\in\aut(\cp^k)$, $i=1,2$ is given by
\[ \begin{pmatrix} 0 & 0 &  & & 0 & s_i \\ \beta_1 & 0 & & & 0 & s_i-\beta_1 \\ 0 & \beta_2 & 0 & & 0 & s_i-\beta_2 \\ & & \ddots& \ddots& & \vdots \\ 0 & & 0&\beta_{k-1} & 0& s_i-\beta_{k-1}\\ 0 & & & 0& \beta_k& s_i-\beta_k\end{pmatrix}\]
where $s_1=1, s_2= (\delta^2+\delta+1)/\delta$ and $\beta_j = (\delta^j-1)(\delta+1) /(\delta^2 (\delta^{k+1} - \delta^j))$ for $j=1, \dots, k$.
\end{thm}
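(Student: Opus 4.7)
The plan is to mirror the derivation at the end of \S\ref{S:Pk_coxeter} leading to Theorem \ref{T:L}, taking advantage of the decomposition $T = T_v \times T_h$ and $S = S_v \times S_h$ in $\aut_0(\cp^k \times \cp^k) = \aut(\cp^k) \times \aut(\cp^k)$, so that $L = T^{-1} S$ splits as $L_1 \times L_2$ with $L_i = T_i^{-1} S_i$.

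The first step is to identify the columns of $T_v, T_h, S_v, S_h$ up to individual scalar multiples. Using $\gamma_1(t) = (\gamma(t), \gamma(t-1))$ and the defining condition $T(\ve_j, \ve_j) = \gamma_1(t_j^+)$ from Theorem \ref{good br maps 2}, the $j$-th column of $T_v$ must be a scalar $a_j^v$ times $\gamma(t_j^+)$, and the $j$-th column of $T_h$ a scalar $a_j^h$ times $\gamma(t_j^+ - 1)$. The orbit data \eqref{E:orbitdata} similarly pins the columns of $S_v, S_h$ as scalar multiples of $\gamma(t_j^-)$ and $\gamma(t_j^- - 1)$, where $t_j^- = t_{j+1}^+$ for $0 \leq j \leq k-1$ and $t_k^-$ is obtained by inverse-iterating $F|_C$ from $t_0^+$ through $n-1$ steps. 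The identities $S(\ve_j, \ve_j) = T(\ve_{j+1}, \ve_{j+1})$ for $j < k$ then yield $L_i \ve_j = (b_j^i / a_{j+1}^i)\,\ve_{j+1}$, which accounts directly for the displayed block of shifted columns in both $L_1$ and $L_2$.

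The second step is to fix the overall scalings. Each of $T_v, T_h, S_v, S_h$ is defined only up to an overall scalar, and I would use this freedom to impose the normalization stated just before the theorem: both $T$ and $S$ send the $J$-fixed point $((1,\dots,1),(1,\dots,1))$ to the cusp $(\ve_k, \ve_k)$ of $\gamma_1(C)$. Concretely, each of $T_v, T_h, S_v, S_h$ must send $(1,\dots,1)$ to some scalar multiple of $\ve_k$; Lemma \ref{linearsystem} applied four times produces closed-form expressions for $a_j^v, a_j^h, b_j^v, b_j^h$ in terms of the parameters $t_j^+$ and $t_j^-$. The third step is the algebraic simplification: substitute the closed form for $t_j^+$ from the previous lemma into $b_{j-1}^i / a_j^i$ and invoke $\chi_{k,n}(\delta) = 0$ to collapse the resulting rational expressions — the same manipulation carried out after Theorem \ref{T:L}. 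This should yield the common value $\beta_j = (\delta^j - 1)(\delta + 1)/(\delta^2(\delta^{k+1} - \delta^j))$ for both $i=1$ and $i=2$. The last column of each $L_i$ is then forced by $L_i(1, \dots, 1)^t = s_i\,(1, \dots, 1)^t$, which yields the form $(s_i, s_i - \beta_1, \dots, s_i - \beta_k)^t$, with $s_i$ the ratio of the two scaling constants $(1,\dots,1) \mapsto c\,\ve_k$ arising from $S_i$ and $T_i$.

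The main obstacle will be in the third step. A priori the vertical ratio $b_{j-1}^v/a_j^v$ and the horizontal ratio $b_{j-1}^h/a_j^h$ need not agree: the horizontal parameters are shifted by $-1$ relative to the vertical ones, so the sums $\sum t_j^+$ and $\sum (t_j^+ - 1) = \sum t_j^+ - (k+1)$ feed different quantities into Lemma \ref{linearsystem}. Showing that the two ratios are in fact equal after appropriate overall rescaling of $L_2$, with residual proportionality constant exactly $s_2 = (\delta^2+\delta+1)/\delta$, is the algebraic heart of the proof and will require careful use of $\chi_{k,n}(\delta) = 0$ to cancel the high-order $\delta$ terms and leave the clean closed forms asserted.
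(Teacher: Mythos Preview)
Your proposal is correct and follows exactly the route the paper indicates: the paper gives no separate proof of Theorem~\ref{multiprojL}, stating only that one should ``imitate the argument for Theorem~\ref{T:L}'' with the normalization that $T$ and $S$ both send $((1,\dots,1),(1,\dots,1))$ to the cusp. Your outline does precisely this, and you have correctly isolated the one genuinely new feature---the $-1$ shift between the vertical and horizontal parameters coming from $\gamma_1(t)=(\gamma(t),\gamma(t-1))$, which is what produces two distinct scaling constants $s_1,s_2$ while the subdiagonal ratios $\beta_j$ end up identical after the overall projective rescaling.
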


{\bf Concluding remarks.}
So far, we have described a construction of pseudo automorphisms which is achieved by blowing up points on the elliptic normal curve.  The same procedure works with other invariant curves.  Two of these that occur in all dimensions are: (i) the rational normal curve and a tangent line, (ii) $k+1$ concurrent lines in general position.  We will make a few comments on case (ii).  First we work in $\cp^k$, and then we consider multi-projective space.

In the case of concurrent lines, we let $L_j$, $0\le j\le k$, denote the line passing through $[1:\cdots:1]$  and $e_j$.   For the parametrizations, we may use:  $\psi_c^{(j)}:{\bf C}\to{\bf P}^k$ with $\psi_c^{(0)}(t) = [-t:1:\cdots:1]$, and for $1\le j\le k$, $\psi_c^{(j)}(t) =[t:0:\cdots:1:\cdots:0]$, where there is one `1', and this appears in the $j$th slot.    If we wish to work on multi-projective spaces, we use the parametrized curve $\Psi:{\bf C}\to( {\bf P}^k)^m={\bf P}^k\times\cdots\times{\bf P}^k$ given by $\Psi(t) = (\psi(t-\tau_0),\psi(t-\tau_1),\dots,\psi(t-\tau_{m-1}))$.  

Now let us consider multi-projective spaces $({\bf P}^k)^m={\bf P}^k\times\cdots\times{\bf P}^k$.   We write a point as $(x,y)=(x,y^{(1)},\dots,y^{(m-1)})\in (\cp^k)^m$.  As a basic Cremona map, we start with 
$$J(x,y):(x,y^{(1)},\dots,y^{(m-1)})\mapsto (y^{(1)}/x,\dots,y^{(m-1)}/x,1/x),$$ 
where as before $y^{(s)}/x= [y^{(s)}_0/x_0:\cdots:y^{(s)}_k/x_k]$.  The exceptional hypersurfaces are given, as in the case $m=2$, by $J: \{x_j=0\}\mapsto (\ve_j,\dots,\ve_j)$.  



With the curve $\Psi$, it is possible to carry through the same principle of construction as in the preceding sections.  We consider the case (ii) of concurrent lines and give the map $L=L_0\times\cdots\times L_{m-1}\in\aut_0(\cp^k\times\cdots\times\cp^k)$ so that the map $f:=L\circ J$ will have orbit data $\{(1,\dots,1,n(k+1)),\sigma\}$, where $\sigma$ is a cyclic permutation.  The orbit length is divisible by $k+1$ because the orbit of $\Sigma_k$ moves cyclically  through each of the $k+1$ lines.   With this orbit data, the resulting pseudo-automorphism will represent the Coxeter element of a $T$-shaped diagram  \cite{Mukai:2004}.   We let $\alpha$ be any Galois conjugate of the dynamical degree $\delta$ for this orbit data, and  the desired  matrices are given by 
$$L_j =\left(\begin{array} {ccccc}
0 & 0 & 0 & 0 & s_j \\ 
v&  0&0&0& s_j-v\\
0&v&0&0 &s_j-v\\
0&0&\ddots &0&s_j-v\\
0&0&0&v&s_j-v \end{array}\right), v = -\alpha\frac{\alpha^m-1}{\alpha-1},   \  s_j = \frac{(\alpha^m-1)(\alpha^{j+1}-1)}{\alpha^{j}(\alpha-1)(\alpha^{m-j}-1)}$$
for $j=0,\dots,m-1$.



\bibliographystyle{plain}
\bibliography{bibliopseudo}
\nocite{}

\end{document}